\renewcommand{\P}{{\rm P}}
\newcommand{\E}{{\rm E}}
\renewcommand{\sp}{\mathrm{sp}}
\newcommand{\norm}[1]{\| #1 \|}
\newcommand{\vc}[1]{\boldsymbol{#1}}
\newcommand{\diag}{\mathrm{diag}}
\newcommand{\ud}{\, \mathrm{d}}  
\def\N{\mathbb N}
\def\DD{{\cal D}}
\def\PP{{\cal P}}
\newcommand{\vect}[1]{\boldsymbol #1}
\newcommand{\valpha}{\vect \alpha}
\newcommand{\vbeta}{\vect \beta}
\newcommand{\vgamma}{\vect \gamma}
\newcommand{\vv}{\vect v}
\newcommand{\vh}{\vect h}
\newcommand{\vone}{\vect 1}
\newcommand{\vzero}{\vect 0}
\newcommand{\vphi}{\varphi}
\newcommand{\vligne}[1]{\begin{bmatrix} #1 \end{bmatrix}}
\newtheorem{defn}{Definition}[section]
\newtheorem{lem}[defn]{Lemma}
\newtheorem{thm}[defn]{Theorem}
\newtheorem{cor}[defn]{Corollary}
\newtheorem{rem}[defn]{Remark}
\newtheorem{exemple}[defn]{Example}
\newcommand{\qed}{\hfill $\square$}
\newenvironment{proof}{
      \noindent {\bf Proof }}{\qed
      \vspace{0.25\baselineskip}
}
\newcommand{\debproof}{\begin{proof}}
\newcommand{\finproof}{\end{proof}}
\newcommand{\tp}{^{\mbox{\tiny T}}}
\newcommand{\bs}{\boldsymbol}
\newcommand{\bsautre}{\mathcal}
\definecolor{darkmagenta}{rgb}{0.5,0,0.5}
\definecolor{darkgreen}{rgb}{0,0.6,0}
\definecolor{darkblue}{rgb}{0,0,0.6}
\definecolor{darkred}{rgb}{0.8,0,0}
\definecolor{mellow}{rgb}{.847, 0.72, 0.525}
\newcommand{\stirlingone}[2]{\genfrac{[}{]}{0pt}{}{#1}{#2}} 
\begin{document}

\title{On the nature of \\
Phase-Type Poisson distributions}
\date{}
\author{Sophie Hautphenne\footnote{The University of Melbourne, Department of Mathematics and Statistics, Victoria 3010, Australia; sophiemh@unimelb.edu.au}
\and
Guy Latouche\footnote{%
Universit\'e Libre de Bruxelles, D\'epartement d'Informatique,
CP~212, Boulevard du Triomphe, 1050 Bruxelles, Belgium; latouche@ulb.ac.be}
\and
Giang T. Nguyen\footnote{Corresponding author: The University of Adelaide, School of Mathematical Sciences, SA 5005, Australia; giang.nguyen@adelaide.edu.au}
}
 \maketitle

\begin{abstract}
Matrix-form Poisson probability distributions were recently introduced 
as one matrix generalization of Panjer distributions.  We show in this paper that under the constraint that their representation is to be nonnegative, they have a physical interpretation  as extensions of PH distributions, and we name this restricted family \emph{Phase-type Poisson}.  We use our physical interpretation to construct an EM algorithm-based estimation procedure.
\end{abstract}

\noindent
AMS (2010) subject classification: 91B30; secondary 65Q30, 62P05.  \\
{\em Keywords:} Panjer's algorithm, generalized Panjer distributions, compound distributions, EM algorithm, minimal variance, PH-Poisson.


\section{Introduction}
\label{s:introduction}

First appeared in Panjer (1981)\nocite{panje81}, Panjer's algorithm is designed to compute efficiently the density of sums of the form $S = \sum_{1 \leq i \leq N}X_i$, where  the $X_i$s are i.i.d. positive random variables and $N$ is random, with a density $\{p_n\}$ that follows the recurrence relation
\begin{equation}
   \label{e:panjer}
 p_n = p_{n-1} (a+b/n)  \qquad \mbox{for $n \geq 1$},
\end{equation}
$p_0$ being such that $\sum_{n \geq 0}p_n = 1$.  If the $X_i$s are nonnegative integer-valued random variables with density $\{f_n\}$, then the density $\{g_n\}$ of $S$ may be recursively computed as 
\begin{equation}
   \label{e:panjerG}
g_0 = p_0, \qquad g_n = \sum_{1 \leq i \leq n} f_ig_{n-i}(a+ib/n) \qquad \mbox{for $n \geq 1$.}
\end{equation}
This is a very efficient procedure, which has excellent numerical stability properties.

The distributions that satisfy (\ref{e:panjer}) belong to a restricted set of families consisting of Poisson, binomial and negative binomial distributions (see Sundt and Jewell~(1981)\nocite{sj81}). Much effort has been spent to extend Panjer's algorithm to other distributions for~$N$. In particular, its extension to Phase-type (PH) distributions is of great interest: since they are dense in the class of distributions on $\N$, this significantly increases the applicability of Panjer's algorithm.

Phase-type distributions have been introduced by Neuts~(1975) and (1981)\nocite{neuts75,neuts81} and they may be
defined algebraically as follows: consider a sub-stochastic matrix $T$ of order $m$
such that $I-T$ is nonsingular, a density vector $\valpha$ of order
$m$, and define a sequence $\{\vv_n\}$ of row vectors with
\begin{equation}
   \label{e:ph}
\vv_1 = \valpha (I-T), \qquad \qquad \vv_n= \vv_{n-1} T \qquad \mbox{for $n \geq 2$}.
\end{equation}
The density  $p_0  = 1 - \valpha\vone$, $p_n =  \vv_{n} \vone$, for $n \geq 1$, where $\vone$ is a column vector of ones, is said to be of phase-type,
 with representation $(\valpha, T)$. 
There is a clear similarity between (\ref{e:panjer}) and (\ref{e:ph}), which suggests that the recursion (\ref{e:panjerG}) might be adapted to provide an efficient and numerically stable algorithm to compute the density of $S$ when $N$ has a PH distribution.  This is done in two recent papers, Wu and Li~(2010) and Siaw et al.~(2011)\nocite{wl10, swpw11}. The former defines the {\em generalized $(a,b,0)$ family} as
\begin{equation}
   \label{e:abz}
p_n = \vgamma P_n \vone \qquad \mbox{for $n \geq 0$,}
\end{equation}
where the matrices
$\{P_n\}$  of order $m$ are recursively defined as follows: %
\begin{equation}
   \label{e:Pn}
P_n = P_{n-1} (A+\frac{1}{n}B) \qquad \mbox{for $n \geq 1$.}
\end{equation}
The parameters are the matrices $A$, $B$, $P_0$ and the vector $\vgamma$, which is assumed to be nonnegative and normalized, so that  $\vgamma \vone = 1$.  

Siaw et al.~(2011)\nocite{swpw11} define the {\em generalized $(a,b,1)$ family}, the difference being that the  recursion (\ref{e:Pn}) starts at $n=2$, and the parameters are   $A$, $B$, $P_1$ and $p_0$, while the matrix $P_0$ becomes irrelevant.
The PH$(\valpha,T)$ distribution belongs to the generalized $(a,b,1)$ family, with $A=T$, $B=0$, $p_0 = 1 - \valpha \vone$, $\vgamma = (\valpha \vone)^{-1} \valpha$ and  $P_1=(\valpha \vone) (I-T)$. 

The core of the algorithm in Wu and Li~(2010) and Siaw et al.~(2011)\nocite{swpw11,wl10} is the vector recursion
\begin{equation}
   \label{e:wlg}
\vh_n = \sum_{1 \leq i \leq n} f_i \vh_{n-i} (A+ \frac{1}{i}B)
\end{equation}
to replace (\ref{e:panjerG}), with $g_n = \vh_n \vone$.  Ren~(2010)\nocite{ren10} gives an improved algorithm in case $N$ and the $X_i$s themselves are of phase-type.  Finally, we note that PH distributions have rational generating functions, and this is the basis for the adaptation in Eisele~(2006)\nocite{eisel06} of Panjer's algorithm to the case where $N$ is PH. A comparison of the complexity and numerical stability of the algorithms in Eisele~(2006), Ren~(2010), Wu and Li (2010) and Siaw et al.~(2011)\nocite{eisel06,ren10,swpw11,wl10} is outside the scope of the present paper.

We expect the generalized $(a,b,0)$ and $(a,b,1)$ distributions to form a very rich family since they include the PH distributions. However, as we show in the next section, the combination of two matrices in~(\ref{e:Pn}) makes these distributions a bit unwieldy, unless one imposes some simplifying constraint.
In Section~\ref{s:sums}, we show that the series $\sum_{n \geq 0}P_n$ is a key quantity and that, for all practical purpose, it is necessary that the spectral radius of $A$ be strictly less than one in order for the series to converge. Before doing so, we briefly address the issue of the choice of representation, and we adopt one that is slightly different from the representation in~Wu and Li~(2010) and Siaw et al.~(2011)\nocite{wl10, swpw11}.

Next, we assume in Section~\ref{s:commuting} that $A$ and $B$ commute.  As matrices go, this is a very strong constraint, but it considerably simplifies the determination of the generating function and of moments, and it is a property of all the examples in~Wu and Li~(2010) and Siaw et al.~(2011)\nocite{wl10, swpw11}.  In Section~\ref{s:ph-poisson}, we focus our attention on distributions for which $A=0$, $B \geq 0$, and $\vgamma \geq \boldsymbol{0}$.  These distributions are interesting because they form a family totally distinct from PH distributions, yet they are amenable to a Markovian representation.  For that reason, we call them {\em Phase-type Poisson} or PH-Poisson distributions.  This physical interpretation opens the way in Section \ref{s:em-algorithm} to an estimation procedure based on the EM algorithm.

\section{Matrix generating function}
\label{s:sums}

We are concerned with distributions $\{p_n\}$ defined as
\begin{equation}
   \label{e:PnB}
p_n = \vbeta P_n \vone, \qquad \mbox{where} \qquad P_n = \prod_{1 \leq i \leq n} (A+ \frac{1}{i} B) \qquad \mbox{for $n \geq 0$,}
\end{equation}
$A$ and $B$ are matrices of order $m$, and $\vbeta$ is a row vector of size $m$.  We use the convention that for $n=0$, the matrix product in (\ref{e:PnB})  is equal to the identity matrix, so that we may recursively define the $P_n$s as
\begin{equation}
   \label{e:PnC}
P_0=I, \qquad P_n = P_{n-1} (A+ \frac{1}{n} B) \qquad \mbox{for $n \geq 1$.}
\end{equation}
We shall write that $\{ p_n \}$ has the representation $\DD(\vbeta, A, B)$ of order $m$.

This definition calls for a few comments.  First, we assume that the recursion (\ref{e:PnB}) starts with $n=0$.  In other words, we are not concerned in this paper with the possibility that the sequence $\{ p_n \}$ does not conform to the general pattern for small values of $n$.  Instead, we focus our attention,  to a large extent, on the matrices $P_n$.  

Second, our definition is slightly different from that of generalized $(a,b,0)$ distributions in Wu and Li~(2010)\nocite{wl10}, where it is assumed that $\vgamma$ is a stochastic vector ($\vgamma \geq \vzero$, $\vgamma \vone = 1$) and that $P_0$ is a matrix chosen according to the circumstances. The two representations are equivalent as it suffices to define $\vbeta = \vgamma P_0$.  Our reason to prefer (\ref{e:PnB}) is that  we do not find any advantage in requiring that $\vbeta$ should be stochastic when $A$, $B$ and $P_0$ are allowed to be of mixed signs.  Furthermore, our definition involves $m^2$ fewer parameters (the entries of $P_0$) and this savings will prove significant in Section~\ref{s:em-algorithm} when we design an estimation procedure.

Finally, one might use left- instead of right-multiplication and define $P_n =  (A+ \frac{1}{n} B)  P_{n-1}$,  yielding a possibly different family of distributions.  Actually, we shall assume in the next section that $A$ and $B$ commute, so that there would be no difference.

We need to impose some constraints on the representations of these distributions, otherwise very little can be said in general.  To begin with, let us associate a transition graph to the matrices $A$ and $B$: the graph contains $m$ nodes, and there is an oriented arc from $i$ to $j$ if 
 $|A_{ij}| + |B_{ij}| \not= 0$. A node~$j$ is said to be useful if there exists a node $i$ such that there is a path from $i$ to $j$ in the transition graph {\em and} such that 
$\beta_i \not= 0$; $j$ is said to be useless otherwise.
The lemma below shows that one may require without loss of generality that representations are chosen without useless nodes.

\begin{lem}
\label{t:useful}
If the representation $\DD(\vbeta, A, B)$ of order $m$ is such that there exists at least one useless node, then there exists another, equivalent, representation of order $m'$ strictly less than $m$.
\end{lem}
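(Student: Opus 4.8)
The plan is to reorder the state space so that the useless nodes are collected together, exhibit the block structure this imposes on $A$, $B$ and $\vbeta$, and then check that deleting the useless block changes none of the probabilities $p_n$. Concretely, let $U$ be the set of useless nodes and $V = \{1,\dots,m\} \setminus U$ the set of useful nodes; $U$ is nonempty by hypothesis, and $V$ is nonempty as well, since any node $i$ with $\beta_i \neq 0$ is useful (it lies on the trivial path from itself to itself), and $\vbeta \neq \vzero$ must hold for the representation to be nontrivial — or, if $\vbeta = \vzero$, then $p_n = 0$ for all $n\ge 1$ and the statement is immediate with the order-$1$ zero representation. After a permutation of coordinates putting the nodes of $V$ first, I claim
\begin{equation}
\label{e:blockstruct}
A = \begin{bmatrix} A_{VV} & A_{VU} \\ \vzero & A_{UU} \end{bmatrix},
\qquad
B = \begin{bmatrix} B_{VV} & B_{VU} \\ \vzero & B_{UU} \end{bmatrix},
\qquad
\vbeta = \begin{bmatrix} \vbeta_V & \vzero \end{bmatrix}.
\end{equation}
The zero blocks in the bottom-left corners of $A$ and $B$ hold because an arc from a useless node $j \in U$ to a useful node $k \in V$ would, composed with a path witnessing $j$'s... no: rather, a useful node $k$ has a path from some $i$ with $\beta_i \neq 0$; if there were an arc $j \to k$ with $j \in U$, that does not make $j$ useful. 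The correct argument runs the other way: the zero block is $A_{UV}$? Let me be careful — I want the set $V$ to be "closed under predecessors that matter". Indeed, if $k \in V$ is useful via a path $i \rightsquigarrow k$ with $\beta_i \neq 0$, and there is an arc $j \to k$, nothing follows about $j$. So the block that vanishes is not automatic from usefulness of $V$; instead it is usefulness that controls which nodes are reachable-from-$\vbeta$. The clean statement: let $R$ be the set of nodes reachable in the transition graph from $\{i : \beta_i \neq 0\}$ (including those $i$ themselves); then $R$ is exactly the set of useful nodes, $R$ is forward-closed (no arcs leave $R$), and $\vbeta$ is supported on $R$. Forward-closure of $R$ gives $A_{\bar R R} $— again I must fix orientation: "no arcs leave $R$" means there is no arc from $i \in R$ to $j \notin R$, i.e. $A_{ij} = B_{ij} = 0$ whenever $i \in R$, $j \notin R$. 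Ordering $R$ first, this is exactly the block-upper-triangular form \eqref{e:blockstruct} with $V = R$, $U = \bar R$.

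Granting \eqref{e:blockstruct}, the key computation is that the leading block of each power-product $P_n$ depends only on the leading blocks of $A$ and $B$. By the recursion \eqref{e:PnC} and block-triangularity, an easy induction on $n$ gives
\begin{equation}
\label{e:Pnblock}
P_n = \begin{bmatrix} P_n^{VV} & * \\ \vzero & * \end{bmatrix},
\qquad \text{where } \quad P_0^{VV} = I, \quad P_n^{VV} = P_{n-1}^{VV}\Bigl(A_{VV} + \tfrac1n B_{VV}\Bigr),
\end{equation}
the $*$ blocks being irrelevant. Since $\vbeta = \begin{bmatrix}\vbeta_V & \vzero\end{bmatrix}$, we get $p_n = \vbeta P_n \vone = \vbeta_V P_n^{VV}\vone_{|V|}$ for every $n \geq 0$. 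Hence $\{p_n\}$ also has the representation $\DD(\vbeta_V, A_{VV}, B_{VV})$, whose order is $|V| = |R| = m - |U| < m$ because $U$ is nonempty. This is the desired equivalent representation of strictly smaller order, and iterating the lemma removes all useless nodes.

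The only genuinely delicate point is the combinatorial claim that the set $R$ of $\vbeta$-reachable nodes is forward-closed and coincides with the set of useful nodes; everything after that is the routine block-matrix induction \eqref{e:Pnblock}. Forward-closure is immediate from the definition of reachability (a successor of a reachable node is reachable), and the identification with usefulness is just unwinding the definition: $j$ is useful iff some $i$ with $\beta_i \neq 0$ has a path to $j$, which is precisely $j \in R$. So I would spend one or two sentences on this and then present the block induction and the identity $p_n = \vbeta_V P_n^{VV}\vone$ as the heart of the proof.
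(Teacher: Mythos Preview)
Your overall strategy is sound and close to the paper's, but the block structure you display is inverted, and this breaks the subsequent computation. You correctly argue that $R$ (the set of $\vbeta$-reachable nodes) is forward-closed, i.e.\ $A_{ij}=B_{ij}=0$ whenever $i\in V=R$ and $j\in U=\bar R$. With $V$ listed first, this kills the \emph{top-right} block, not the bottom-left one: the correct form is
\[
A = \begin{bmatrix} A_{VV} & 0 \\ A_{UV} & A_{UU} \end{bmatrix},
\qquad
B = \begin{bmatrix} B_{VV} & 0 \\ B_{UV} & B_{UU} \end{bmatrix},
\qquad
\vbeta = \begin{bmatrix} \vbeta_V & \vzero \end{bmatrix},
\]
so each $P_n$ is block-\emph{lower}-triangular. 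With the upper-triangular form you wrote, one obtains $\vbeta P_n\vone = \vbeta_V P_n^{VV}\vone + \vbeta_V (P_n)_{VU}\vone$, and the second term need not vanish, so the identity $p_n=\vbeta_V P_n^{VV}\vone$ is not justified as written. The fix is trivial: either swap the zero block (then $\vbeta P_n = [\vbeta_V P_n^{VV},\ \vzero]$ directly), or list $U$ first to recover an upper-triangular picture with $\vbeta=[\vzero,\ \vbeta_V]$.

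Modulo this slip, your argument is a mild variant of the paper's. The paper fixes one useless node $j$, takes $S_1$ to be $j$ together with all of its \emph{predecessors}, observes that $S_1$ has no incoming arcs from outside and that $\vbeta$ vanishes on $S_1$ (otherwise $j$ would be useful), and deletes $S_1$. You instead delete the entire useless set $U=\bar R$ in one stroke. Your version is marginally cleaner in that it produces a representation with no useless nodes after a single application, whereas the paper's reduction may leave further useless nodes inside $S_1^c$ and then has to be iterated.
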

\begin{proof}
Assume that $j$ is a useless node; define $S_1$ to be the subset of nodes containing $j$ and all the nodes $i$ for which there exists a path from $i$ to $j$.  The matrices $A$ and $B$ may be written, possibly after a permutation of rows and columns, as
\[
A = \begin{bmatrix}
A_{1,1} & A_{1,2} \\ 0 & A_{2,2}
\end{bmatrix}
\qquad \mbox{and} \qquad
B = \begin{bmatrix}
B_{1,1} & B_{1,2} \\ 0 & B_{2,2}
\end{bmatrix},
\]
where $A_{1,1}$ and $B_{1,1}$ are indexed by the nodes in $S_1$ and $A_{2,2}$ and $B_{2,2}$ are indexed by the remaining nodes; similarly, we have
\[
P_n = \begin{bmatrix}
(P_n)_{1,1} & (P_n)_{1,2} \\ 0 & (P_n)_{2,2}
\end{bmatrix}
\]
with $(P_n)_{1,1} = \prod_{1 \leq i \leq n} (A_{1,1}+ \frac{1}{i} B_{1,1})$
and 
 $(P_n)_{2,2} = \prod_{1 \leq i \leq n} (A_{2,2}+ \frac{1}{i} B_{2,2})$.

We partition $\vbeta$ in a similar manner and write $\vbeta = \vligne{\vbeta_1 & \vbeta_2}$.  Since $j$ is useless, $\vbeta_1=\vzero$.
It is clear that $\vbeta P_n \vone = \vbeta_2 (P_n)_{2,2} \vone$, so that $\DD(\vbeta_2, A_{2,2}, B_{2,2})$ is an equivalent representation, of order  strictly smaller than $m$.
\end{proof}

The generating function $p(z) = \sum_{n \geq 0} z^n p_n$ may be written as $p(z)= \vbeta P(z;A,B)\vone$, where 
\begin{equation}
   \label{e:Pz}
P(z;A,B) = \sum_{n \geq 0} z^n P_n = \sum_{n \geq 0} z^n \prod_{1 \leq i \leq n} (A + \frac{1}{i} B),
\end{equation}
provided that the series in (\ref{e:Pz}) converges.  We focus our attention on the matrix generating function $P(z;A,B)$ and  we discuss its convergence properties as $z \rightarrow 1$.  A simple condition for $P(1;A,B)$ to be finite is given in the next lemma.

\begin{lem}
   \label{t:spA}
If $\sp(A) < 1$, where $\sp(\cdot)$ denotes the spectral radius, then the series $P(z;A,B)$ converges for $|z|\leq 1$.

If  $A \geq 0$ and $B \geq 0$, then the inequality $\sp(A) < 1$ is both necessary and sufficient.
\end{lem}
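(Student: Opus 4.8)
The plan is to treat the two assertions separately. For the first, I would compare the matrix series $P(z;A,B)$ entrywise (in absolute value) with a scalar series that visibly converges. Write $P_n = \prod_{1\le i\le n}(A+\frac1i B)$. Fix any matrix norm $\norm{\cdot}$ that is submultiplicative; then $\norm{P_n} \le \prod_{1\le i\le n}\norm{A+\frac1i B} \le \prod_{1\le i\le n}(\norm{A}+\frac1i\norm{B})$. This bound is too crude on its own, because $\norm{A}$ need not be less than one even when $\sp(A)<1$. The remedy is standard: since $\sp(A)<1$, pick $\rho$ with $\sp(A)<\rho<1$ and a submultiplicative norm $\norm{\cdot}_\rho$ (for instance induced by a suitable similarity transformation, or via Gelfand's formula applied to $A^k$) such that $\norm{A}_\rho \le \rho$. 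Then $\norm{P_n}_\rho \le \prod_{1\le i\le n}(\rho + \frac1i \norm{B}_\rho) = \rho^n \prod_{1\le i\le n}(1 + \frac{c}{i\rho})$ with $c=\norm{B}_\rho$. The product $\prod_{1\le i\le n}(1+\frac{c}{i\rho})$ grows only like $n^{c/\rho}$ (its logarithm is $\sum_i \log(1+\frac{c}{i\rho}) \sim \frac{c}{\rho}\log n$), so $\norm{P_n}_\rho = O(\rho^n n^{c/\rho})$, and hence $\sum_{n\ge0}\norm{z^n P_n}_\rho \le \sum_{n\ge0}\norm{P_n}_\rho < \infty$ for $|z|\le 1$. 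Absolute convergence in norm gives convergence of the series, proving the first claim.

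For the second assertion, assume $A\ge0$ and $B\ge0$; then every $P_n\ge0$ and the partial sums $\sum_{n=0}^N z^n P_n$ are, for $z=1$, nondecreasing in $N$ entrywise. Sufficiency of $\sp(A)<1$ is already covered by the first part. For necessity I would argue by contraposition: suppose $\sp(A)\ge1$. Since $A\ge0$, Perron--Frobenius theory gives a nonzero nonnegative left eigenvector, or more simply it suffices to observe that $P_n \ge \prod_{1\le i\le n} A = A^n$ entrywise, because $A+\frac1i B \ge A \ge 0$ and products of nonnegative matrices preserve entrywise order. Hence $P(1;A,B) \ge \sum_{n\ge0} A^n$ entrywise. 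But $\sum_{n\ge0}A^n$ diverges precisely when $\sp(A)\ge1$: again by Perron--Frobenius there is $\vx\ge\vzero$, $\vx\neq\vzero$, with $A\vx = \sp(A)\vx$, so $A^n\vx = \sp(A)^n\vx$ and the entries of $\sum_{n=0}^N A^n \vx \ge \sum_{n=0}^N \sp(A)^n\vx$ are unbounded. Therefore at least one entry of $P(1;A,B)$ is infinite, so the series diverges, completing the contrapositive and hence the equivalence.

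I expect the main obstacle to be the first part — specifically, justifying cleanly that $\sp(A)<1$ lets one choose a submultiplicative norm in which $\norm{A}\le\rho<1$, and then controlling the factor $\prod_i(1+\frac{c}{i\rho})$ so that it does not destroy the geometric decay $\rho^n$. Both ingredients are classical (the first is the standard fact that $\sp(A)=\inf_{\norm{\cdot}}\norm{A}$ over submultiplicative norms; the second is an elementary estimate on a divergent product against a convergent geometric series), so the argument is routine once these are assembled, but they are the only nontrivial points. The second part is comparatively soft, relying only on entrywise monotonicity and the elementary direction of Perron--Frobenius for nonnegative matrices. A minor point to handle is the case where $A$ is reducible or has a zero Perron vector component; using the inequality $P_n\ge A^n$ and testing against the Perron eigenvector of $A$ (which exists with $\sp(A)$ as eigenvalue for any $A\ge0$) sidesteps any such difficulty.
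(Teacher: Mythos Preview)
Your argument is correct and follows the same strategy as the paper: both parts hinge on choosing a submultiplicative norm with $\norm{A}<1$ and, for the nonnegative case, on the entrywise inequality $P_n\ge A^n$. The only difference is bookkeeping in the first part: the paper observes that for $i$ large enough each factor satisfies $\norm{A+\tfrac{1}{i}B}<\eta<1$, so the tail of the product is bounded by $\eta^{\,n-i^*}$ and the root test gives radius of convergence $>1$; you instead keep all factors and estimate $\prod_{i\le n}(1+\tfrac{c}{i\rho})=O(n^{c/\rho})$, obtaining the explicit bound $\norm{P_n}=O(\rho^n n^{c/\rho})$. Either route is routine once the norm is in hand, and your version has the minor bonus of giving a quantitative decay rate.
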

\begin{proof}
The convergence radius $R$ of the series in (\ref{e:Pz}) is given by $R^{-1} = \lim \sup_n \sqrt[n]{\| P_n\|}$, where $\|\cdot\|$ is any matrix norm. 
To simplify the notations, we define $C_i = A+ (1/i)B$.  For any consistent norm, $\norm{P_n} \leq \norm{C_1} \norm{C_2} \cdots  \norm{C_n}$.  Furthermore,
$\norm{C_i} \leq \norm{A} + (1/i) \norm{B}$ and for any $\varepsilon >0$, there exists a norm such that $\norm A < \sp(A) + \varepsilon$.

This implies that if $\sp(A) <1$, then there exist $\eta < 1$ and $i^*$ such that $\norm{C_i} < \eta$ for all $i \geq i^*$.  In addition,
\begin{align*}
\norm{P_n}^{1/n} & \leq \norm{C_1 C_2 \cdots C_{i^*}}^{1/n}   (\norm{C_{i^*+1}} \cdots  \norm{C_n})^{1/n}  \\
 & \leq  \norm{C_1 C_2 \cdots C_{i^*}}^{1/n}   \eta^{(n-i^*)/n},
\end{align*}
for $n \geq i^*$, and $\norm{C_1 C_2 \cdots C_{i^*}}^{1/n}   \eta^{(n-i^*)/n} \rightarrow \eta$ as $n \rightarrow \infty$.  We conclude, therefore, that $\lim\sup_n \sqrt[n]{\|P_n\|} \leq \eta$ and $R \geq 1/\eta > 1$, which proves the first claim.

If $A$ and $B$ are non-negative, then $P_n \geq A^n$ and $P(1;A,B) \geq \sum_{n \geq 0} A^n$; since the last series diverges if $\sp(A)\geq 1$, this completes the proof of the second claim. 
\end{proof}

Note that $\sp(A) < 1$ cannot be a {necessary} condition in all
generality: to give one example, if there is some $n^*$ such that $P_n =0$
for all $n > n^*$, then the series in (\ref{e:Pz}) reduces to a finite
sum, and the spectral radius of $A$ has no bearing on its convergence;
such is the case if $B = -n^*A$.

We turn our attention to the derivatives 
\begin{equation}
   \label{e:Mn}
M_n(A,B) = \left. \frac{\partial^n}{\partial z^n} P(z;A,B) \right|_{z=1},
\end{equation}
for $n \geq 1$, assuming that they exist.  In that  case, the factorial moments of the distribution are given by $m_n=\vbeta M_n(A,B) \vone$.  From the proof of  Lemma~\ref{t:spA}, if $\sp(A) < 1$, then $P(z;A,B)$ is a matrix of analytic functions in the closed unit disk, and it 
 is a sufficient condition for the derivatives to be finite at $z=1$.

\begin{lem}
   \label{t:Mn}
The matrices $M_n(A,B)$ are given by
\begin{equation}
   \label{e:Mna}
M_n(A,B) = n! P_n P(1;A,nA+B).
\end{equation}
If $\sp(A) < 1$, then we also have
\begin{equation}
   \label{e:Mnb}
M_n(A,B) = n! P(1;A,B) \widetilde P_n,
\end{equation}
where 
\[
\widetilde P_n = \prod_{1 \leq i \leq n} ((A+ \frac{1}{i} B)(I-A)^{-1}).
\]
\end{lem}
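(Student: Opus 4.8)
The plan is to prove the two identities by two different routes: \eqref{e:Mna} by direct manipulation of the termwise-differentiated power series, and \eqref{e:Mnb} by setting up a first-order matrix differential equation for $P(z;A,B)$ and iterating it.

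For \eqref{e:Mna}, I would differentiate $P(z;A,B)=\sum_{k\geq 0}z^kP_k$ term by term — legitimate within the radius of convergence, and at $z=1$ under the standing hypothesis that $M_n(A,B)$ exists — to get $M_n(A,B)=\sum_{k\geq n}\frac{k!}{(k-n)!}P_k$. Reindexing with $k=n+j$ and peeling off the first $n$ factors of $P_{n+j}$, one has $P_{n+j}=P_n\prod_{\ell=1}^{j}(A+\frac{1}{n+\ell}B)$, the factor $P_n$ emerging on the left exactly as needed. Using $\frac{(n+j)!}{j!}=n!\prod_{\ell=1}^{j}\frac{n+\ell}{\ell}$ and absorbing the scalar $\frac{n+\ell}{\ell}$ into the $\ell$-th matrix factor (scalars commute with matrices), that factor becomes $\frac{n+\ell}{\ell}A+\frac{1}{\ell}B=A+\frac{1}{\ell}(nA+B)$, which is precisely the $\ell$-th factor of $P_j$ with $B$ replaced by $nA+B$. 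Summing over $j$ then gives $M_n(A,B)=n!\,P_n\,P(1;A,nA+B)$.

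For \eqref{e:Mnb}, I would first derive the differential equation. From the recursion, $nP_n=nP_{n-1}A+P_{n-1}B$; multiplying by $z^{n-1}$, summing over $n\geq 1$, and recognising $\sum_{n\geq 1}nz^{n-1}P_n=\partial_zP$ together with $\sum_{n\geq 1}nz^{n-1}P_{n-1}A=(z\,\partial_zP+P)A$, one obtains $\partial_zP(z;A,B)=P(z;A,B)(A+B)(I-zA)^{-1}$ on $|z|\leq 1$, the hypothesis $\sp(A)<1$ guaranteeing both convergence of the relevant series and invertibility of $I-zA$ on the closed disk. Then I would prove by induction on $n$ that $\partial_z^nP(z;A,B)=P(z;A,B)\prod_{i=1}^{n}\bigl[(iA+B)(I-zA)^{-1}\bigr]$: differentiating the right-hand side by the product rule yields one term from $\partial_zP$ and one term from each factor $(I-zA)^{-1}$ (whose derivative is $(I-zA)^{-1}A(I-zA)^{-1}$), and these combine telescopically — at each stage the identity $(iA+B)+A=(i{+}1)A+B$ fuses two successive terms — to produce the $(n{+}1)$-st product. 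Evaluating at $z=1$ and distributing $n!$ over the product turns $\prod_{i=1}^{n}\bigl[(iA+B)(I-A)^{-1}\bigr]$ into $n!\prod_{i=1}^{n}\bigl[(A+\frac{1}{i}B)(I-A)^{-1}\bigr]=n!\,\widetilde P_n$, giving $M_n(A,B)=n!\,P(1;A,B)\widetilde P_n$.

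The main obstacle is the bookkeeping in the induction for \eqref{e:Mnb}: one must track that $A+B=1\cdot A+B$ initialises the product correctly, that each $\partial_z(I-zA)^{-1}$ is inserted in the right slot, and that the telescoping collapse lines up factor by factor on the correct side — none of these matrices commute, so the left-to-right ordering must be respected at every step. A secondary and much more minor point is justifying term-by-term differentiation at the boundary point $z=1$ in \eqref{e:Mna}; this is immediate when $\sp(A)<1$ (the radius of convergence then exceeds $1$), and otherwise is subsumed under the standing assumption that $M_n(A,B)$ exists.
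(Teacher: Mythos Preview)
Your proposal is correct and follows essentially the same route as the paper. For \eqref{e:Mna} the paper first proves the single-step identity $\partial_z P(z;A,B)=(A+B)P(z;A,A+B)$ and then iterates, which is the same reindexing you carry out in one shot; for \eqref{e:Mnb} the paper derives the identical ODE (citing Wu--Li rather than rederiving it) and likewise appeals to induction, the telescoping you describe being precisely the content the paper leaves implicit.
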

\begin{proof}
We write
\begin{align}
   \nonumber
P_n & = \frac{1}{n!} \prod_{1 \leq i \leq n}(iA+B)  = \frac{1}{n!} (A+B)\prod_{1 \leq i \leq n-1}(iA + (A+B)) \\
   \nonumber   
  & = \frac{1}{n} (A+B) \prod_{1 \leq i \leq n-1} (A + \frac{1}{i} (A+B))
\end{align}
so that
\begin{align}
   \nonumber
\frac{\partial}{\partial z} P(z;A,B) & = \sum_{n \geq 1} n z^{n-1}P_n  = (A+B) \sum_{n \geq 1}  z^{n-1}  \prod_{1 \leq i \leq n-1} (A + \frac{1}{i} (A+B)) \\
   \nonumber 
 & = (A+B) P(z;A,A+B)
\end{align}
and, by induction, 
\begin{align*}
\frac{\partial^n}{\partial z^n} P(z;A,B) & = (A+B) (2A+B) \cdots (nA+B) P(z;A,nA+B) \\
 & = n! P_n P(z;A,nA+B)
\end{align*}
for all $n$, from which (\ref{e:Mna}) results.   

On the other hand, Lemma~1 in Wu and Li~(2010)\nocite{wl10} states that 
\[
\frac{\partial}{\partial z} P(z;A,B) = z \frac{\partial}{\partial z} P(z;A,B) A + P(z;A,B) (A+B).
\]
If $\sp(A)<1$, then
\[
\frac{\partial}{\partial z} P(z;A,B) = P(z;A,B) (A+B) (I-zA)^{-1}
\]
for $|z|\leq 1$, 
from which (\ref{e:Mnb}) readily results by induction.
\end{proof}

This lemma points to the importance of being able to determine the matrix $P(1;A,B)$.  
In some special cases, an explicit expression may be derived but
in general, in the absence of any simplifying feature of the pair $(A,B)$, there does not seem to be an alternative to the brute force calculation of the series $\sum_{n \geq 0} \prod_{1 \leq i \leq n-1} (A + \frac{1}{i} B)$.

\section{Commutative matrix product}
\label{s:commuting}

In this section, we assume that $A$ and $B$  commute and thereby obtain a stronger result than in Section~\ref{s:sums}.  This assumption is satisfied for all examples in Wu and Li~(2010)\nocite{wl10}, where either $A =0$ or $B$ is a scalar multiple of $A$. It is also satisfied if $A$ or $B$ is a scalar matrix $cI$ for some scalar $c$, or if $B = 0$. The latter includes PH$(\boldsymbol{\alpha},T)$ distributions if there exists a solution to the system of linear constraints $\boldsymbol{\alpha}(I - T)T^{n - 1}\boldsymbol{1} = \boldsymbol{\beta}A^n\boldsymbol{1}$ for $n \geq 1$; if $T$ is invertible, then an obvious solution is $\boldsymbol{\beta} = \boldsymbol{\alpha}(I - T)T^{-1}$, $A=T$.

Thus, although it is a restrictive assumption from a linear algebraic point of view, it may be reasonable in the context of stochastic modeling.

\begin{thm}
   \label{t:pzcommute}
If $A$ and $B$ commute, then $P(z;A,B)= e^{(A+B)D(z;A)}$, where 
\begin{align*}
D(z;A) & = z \sum_{n \geq 1} \frac{1}{n} (zA)^{n-1} \\
  & = A^{-1} \log (I-zA)^{-1} \qquad \mbox{if $A$ is nonsingular.}
\end{align*}
Furthermore, if $B=-kA$ for some integer $k \geq 1$, then $P(z;A,B)= (I-zA)^{k-1}$ and $P(1;A,B)$ is finite, otherwise, $P(1;A,B)$ converges if and only if $\sp(A)<1$.
\end{thm}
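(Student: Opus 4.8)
The plan is to exploit commutativity to turn the matrix product defining $P_n$ into something that behaves like a scalar power series, and then recognize the exponential series. First I would rewrite the matrix product: since $A$ and $B$ commute, for each $n$ the matrix $P_n = \prod_{1 \leq i \leq n}(A + \frac1i B)$ can be expanded by the (now legitimate) multinomial-type expansion, collecting terms in powers of $A$ and $B$. The key observation is that $\prod_{1\leq i \leq n}(A + \frac1i B)$, when expanded, produces a sum of monomials $A^{n-j}B^{j}$ with coefficients that are the elementary symmetric functions of $\{1, \frac12, \ldots, \frac1n\}$; more usefully, one can check directly that $\sum_{n\geq 0} z^n P_n$ satisfies the same scalar-type differential identity as in Lemma~\ref{t:Mn}, namely $\frac{\partial}{\partial z} P(z;A,B) = P(z;A,B)(A+B)(I-zA)^{-1}$ when things converge, but now with $(A+B)$ and $(I-zA)^{-1}$ commuting with $P(z;A,B)$. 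Solving this linear matrix ODE with initial condition $P(0;A,B)=I$ gives $P(z;A,B) = \exp\!\big((A+B)\int_0^z (I-wA)^{-1}\,dw\big) = \exp\!\big((A+B)D(z;A)\big)$, where $D(z;A) = \int_0^z (I-wA)^{-1}\,dw = \sum_{n\geq 1}\frac{z^n}{n}A^{n-1}$, matching the stated formula; the closed form $A^{-1}\log(I-zA)^{-1}$ for nonsingular $A$ is then immediate from the scalar power series.

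For the case $B = -kA$ with integer $k\geq 1$, I would substitute into the exponential formula: $A+B = (1-k)A$, so $P(z;A,B) = \exp\!\big((1-k)A\,D(z;A)\big) = \exp\!\big((1-k)\log(I-zA)^{-1}\big) = (I-zA)^{k-1}$, at least formally via the scalar identity $\exp((1-k)\log(1-zA)^{-1}) = (1-zA)^{k-1}$; one must be slightly careful when $A$ is singular, but the identity $P(z;A,B) = (I-zA)^{k-1}$ can alternatively be verified directly from the recursion (\ref{e:PnC}) since $P_n = \frac{1}{n!}\prod_{1\leq i\leq n}(iA - kA) = \frac{1}{n!}\prod_{1\leq i\leq n}(i-k)A^n = \binom{k-1}{n}(-A)^n$, and $\sum_{n\geq 0}\binom{k-1}{n}(-zA)^n = (I-zA)^{k-1}$ is a terminating binomial sum (it terminates at $n = k-1$), so $P(1;A,B)$ is trivially finite regardless of $\sp(A)$.

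For the remaining claim, that when $B$ is not of the form $-kA$ the series $P(1;A,B)$ converges if and only if $\sp(A)<1$: sufficiency is already Lemma~\ref{t:spA}. For necessity I would argue by contraposition using the spectral structure. Since $A$ and $B$ commute, they are simultaneously triangularizable (over $\C$), so there is an invertible $Q$ with $Q^{-1}AQ$ and $Q^{-1}BQ$ both upper triangular; the diagonal entries are eigenvalues $a_\ell$ of $A$ and $b_\ell$ of $B$, paired along the diagonal. The $\ell$-th diagonal entry of $P_n$ is then $\prod_{1\leq i\leq n}(a_\ell + \frac1i b_\ell) = \frac{1}{n!}\prod_{1\leq i\leq n}(i a_\ell + b_\ell)$, and the series $\sum_n z^n$ times this entry is a scalar generating function whose radius of convergence I can compute exactly by Stirling's approximation applied to $\prod_{i=1}^n (i a_\ell + b_\ell)$: if $a_\ell \neq 0$ this behaves like $a_\ell^n \, n! \, n^{b_\ell/a_\ell}/\Gamma(b_\ell/a_\ell)$ up to constants (when $b_\ell/a_\ell$ is not a nonpositive integer, in which case the product never vanishes), so the diagonal term of $P_n$ grows like $a_\ell^n n^{b_\ell/a_\ell - 1}$, giving radius of convergence exactly $1/|a_\ell|$. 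If some eigenvalue $a_\ell$ of $A$ has $|a_\ell|\geq 1$ and the corresponding $b_\ell/a_\ell$ is not a nonpositive integer, the diagonal entry already diverges at $z=1$, so $P(1;A,B)$ cannot be finite. The subtle case — and the main obstacle — is handling eigenvalues where $b_\ell/a_\ell$ \emph{is} a nonpositive integer $-k_\ell$: there the diagonal product terminates and contributes nothing, so divergence must come from off-diagonal blocks, i.e.\ from Jordan-type structure mixing eigenvalues. I would handle this by noting that $B = -kA$ globally is exactly the degenerate case already treated, and that if $B \neq -kA$ then either some eigenvalue pair violates the nonpositive-integer condition (handled above) or the ratios $b_\ell/a_\ell$ are nonpositive integers but not all equal / not matching a single $k$, in which case one examines a $2\times 2$ upper-triangular sub-block with distinct ratios and shows the off-diagonal entry of $P_n$, which is a sum involving differences of the two scalar products divided by $(a_\ell - a_{\ell'})$-type factors, inherits the larger of the two growth rates and hence diverges at $z=1$ when $\sp(A)\geq 1$. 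Making this last sub-block estimate fully rigorous — in particular pinning down the asymptotics of the off-diagonal entries of a triangularized commuting product — is where the real work lies; everything else is the scalar power-series computation dressed in matrix notation.
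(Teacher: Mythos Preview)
Your derivation of the exponential formula is correct but takes a genuinely different route from the paper. The paper expands $\prod_{1\le i\le n}(iA+B)$ via Stirling numbers of the first kind as $\sum_{0\le i\le n}\stirlingone{n}{i}(A+B)^iA^{n-i}$, substitutes into the definition of $P(z;A,B)$, interchanges the order of summation, and then invokes the exponential generating-function identity $i!\sum_{k\ge i}\frac{1}{k!}\stirlingone{k}{i}x^{k-i}=(\sum_{k\ge 1}x^{k-1}/k)^i$ from Knuth to collapse the double sum into $\exp((A+B)D(z;A))$. Your ODE approach---using the Wu--Li differential relation already quoted in Lemma~\ref{t:Mn} together with commutativity so that $(A+B)(I-zA)^{-1}$ commutes with $P(z;A,B)$ and the matrix ODE integrates to an ordinary exponential---is more direct and bypasses the combinatorics entirely; the minor cost is that the ODE is only literally valid where $(I-zA)^{-1}$ exists, so a word about formal power series or analytic continuation is needed. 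Both approaches handle $B=-kA$ essentially the same way.

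For the necessity of $\sp(A)<1$, the paper does \emph{not} triangularize. Having already set up the Stirling expansion, it uses the asymptotic $\stirlingone{k}{i}/k!=O((\log k)^{i-1}/(i-1)!)$ (citing Wilf) together with $\|A^k\|=O(\sp(A)^kk^r)$ to compute, via the root test, that the inner series $\sum_{k\ge i}\frac{1}{k!}\stirlingone{k}{i}(zA)^{k-i}$ has radius of convergence exactly $1/\sp(A)$; this simultaneously justifies the interchange of sums and determines when the representation is valid at $z=1$. Your simultaneous-triangularization plan is a reasonable alternative and your diagonal-entry asymptotics are right, but the gap you yourself flag---controlling the off-diagonal entries when every dominant eigenvalue pair happens to have $b_\ell/a_\ell$ a nonpositive integer yet $B\ne -kA$ globally---is real and not straightforward to close. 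The paper's Stirling-number route sidesteps this Jordan-block analysis by working with norms of powers of $A$ rather than individual eigenvalues, which is what that approach buys.
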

\begin{proof}
First, we observe that
\begin{equation}
   \label{e:stirling}
\prod_{1 \leq i \leq n} (iA+B) = \sum_{0 \leq i \leq n} \stirlingone{n}{i} (A+B)^i A^{n-i}
\end{equation}
where $\stirlingone{n}{i}$ are Stirling's numbers of the first kind.  If $A$ and $B$ are scalars, then~(\ref{e:stirling}) is a straightforward consequence of the definition of Stirling's numbers in   Knuth~(1968)\nocite{knuth68}, Section 1.2.6, equation (40). To prove the extension to commuting matrices, one proceeds by induction, using 
\[
\stirlingone{n}{0} =0,  \qquad \stirlingone{n}{n} = 1, \qquad \text{and} \qquad \stirlingone{n}{i-1} + n \stirlingone{n}{i}  = \stirlingone{n+1}{i}
\]
for $n \geq 1$. Next, we write
\begin{align}
   \label{e:ta}
P(z;A,B) & = \sum_{k \geq 0} z^k \frac{1}{k!} \sum_{0 \leq i \leq k} \stirlingone{k}{i} (A+B)^i A^{k-i} 
\qquad \mbox{by (\ref{e:Pz}, \ref{e:stirling})} \\
   \label{e:tb}
 & = \sum_{i \geq 0} z^i (A+B)^i \sum_{k \geq i} \frac{1}{k!} \stirlingone{k}{i} (z A)^{k-i}
\end{align}
since, as we show later, we may interchange the order of summation. By equations (25) and (26) in Knuth (1968)\nocite{knuth68},  Section 1.2.9, 
\[
i! \sum_{k \geq i} \frac{1}{k!} \stirlingone{k}{i} x^{k-i} = (\sum_{k \geq 1} \frac{1}{k}  x^{k-1})^i = (x^{-1} \log (1-x)^{-1})^i,
\]
and (\ref{e:tb}) becomes
\[
P(z;A,B) = \sum_{i \geq 0} \frac{1}{i!} ((A+B) D(z;A))^i,
\]
which proves the first claim.

If $B=-kA$, then  
\[
P(z;A,B)=e^{(1-k)zA\sum_{n \geq 1}(zA)^{n-1}/n} = e^{(k-1)\log(I-zA)} = (I-zA)^{k-1}.
\]
Thus, it remains for us to justify the transition from (\ref{e:ta}) to (\ref{e:tb}).  To that end, we show that the series is absolutely convergent if and only if $\sp(A) < 1$.  It is well-known that $\|A^k\| = O(1) \sp(A)^k k^r$ asymptotically as $k \rightarrow \infty$, for some integer $r \geq 0$.  Furthermore, $\stirlingone{k}{i}/k! = O(1) (\log k)^{i-1}/ (i-1)!$, by Theorem 1 in Wilf~(1993)\nocite{wilf93}.  Therefore, 
\[
\lim_{k \rightarrow \infty} \sqrt[k]{ \frac{1}{k!} \stirlingone{k}{i} \|A^{k-i}\| } = \sp(A) \lim_{k \rightarrow \infty} \sqrt[k]{k^r (\log k)^{i-1}} = \sp(A)
\]
so that the series $\sum_{k \geq i} \frac{1}{k!} \stirlingone{k}{i} (z A)^{k-i}$ in~(\ref{e:tb}) absolutely converges in $\|z\|\leq 1$ if and only if $\sp(A)<1$, in which case its limit is $\frac{1}{ i!}(D(z;A)/z)^i$. The equation~(\ref{e:tb}) becomes
\[
P(z;A,B) = \sum_{i \geq 0} \frac{1}{i!} ((A+B) D(z;A))^i
\]
which converges without further constraint.
\end{proof}

This theorem confirms the important role of the matrix $A$ with respect to the convergence of various series.  A direct consequence is that if $A$, $B_1$ and $B_2$ are three commuting matrices, then
\begin{align}
   \nonumber
P(z;A,B_1) P(z;A,B_2) & = e^{(A+B_1)D(z;A)}  e^{(A+B_2)D(z;A)} =e^{(2A+B_1+B_2)D(z;A)}  \\
 & = P(z;A,A+B_1+B_2),
\end{align} 
so that, if $A$ and $B$ commute, we may write that
\begin{align*}
P(z; A, kA+B) &= P(z;A,B) (P(z; A,0))^k   = P(z;A,B) (e^{AD(z;A)})^k \\
 & = P(z;A,B) (I-zA)^{-k}
\end{align*}
for $k \geq 0$, $k$  integer, and we may state the following property, using either~(\ref{e:Mna}) or (\ref{e:Mnb}):

\begin{cor}
   \label{t:Mnb}
If $A$ and $B$ commute, then the $n$th factorial moment of the distribution is given by
\begin{equation}
   \label{e:moment}
m_n(\vbeta, A, B) = n! \vbeta P(1; A,B) (I-A)^{-n} P_n \vone.
\end{equation}
\qed
\end{cor}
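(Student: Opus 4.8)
The plan is to read the formula straight off Lemma~\ref{t:Mn}, combined with the identity for $P(z;A,kA+B)$ recorded just above the statement, exploiting that every matrix appearing in the argument is a polynomial or convergent power series in the commuting pair $(A,B)$ and hence that all the factors commute.

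First I would start from equation~(\ref{e:Mna}), $M_n(A,B) = n!\,P_n\,P(1;A,nA+B)$, which holds with no spectral assumption. Applying the identity $P(z;A,kA+B) = P(z;A,B)(I-zA)^{-k}$ (valid for integer $k\ge 0$ when $A$ and $B$ commute, a consequence of Theorem~\ref{t:pzcommute}) at $z=1$ and $k=n$ gives $P(1;A,nA+B) = P(1;A,B)(I-A)^{-n}$, so that $M_n(A,B) = n!\,P_n\,P(1;A,B)(I-A)^{-n}$. Next I would observe that the three factors commute pairwise: $P_n=\prod_{1\le i\le n}(A+\tfrac1i B)$ is a polynomial in $A$ and $B$; by Theorem~\ref{t:pzcommute}, $P(1;A,B)=e^{(A+B)D(1;A)}$ is a power series in $A$ and $B$; and $(I-A)^{-1}=\sum_{k\ge 0}A^k$ is a power series in $A$ — and $A$ commutes with $B$. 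Hence we may reorder the product as $M_n(A,B) = n!\,P(1;A,B)(I-A)^{-n}P_n$, and pre-multiplying by $\vbeta$, post-multiplying by $\vone$, and using $m_n(\vbeta,A,B)=\vbeta M_n(A,B)\vone$ yields~(\ref{e:moment}).

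As a cross-check, and as an alternative when $\sp(A)<1$, one can instead start from~(\ref{e:Mnb}): since $(I-A)^{-1}$ commutes with each factor $A+\tfrac1i B$, the product $\widetilde P_n=\prod_{1\le i\le n}\big((A+\tfrac1i B)(I-A)^{-1}\big)$ collapses to $P_n(I-A)^{-n}=(I-A)^{-n}P_n$, and~(\ref{e:Mnb}) again gives $M_n(A,B)=n!\,P(1;A,B)(I-A)^{-n}P_n$, so the two routes agree.

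There is no serious obstacle here; the only point needing a little care is the justification that the rearrangement of factors is legitimate, i.e.\ that each of $P_n$, $P(1;A,B)$ and $(I-A)^{-n}$ is built from the commuting pair $(A,B)$ by operations — finite products and convergent power series — that preserve commutativity, so the matrices $P_n$, $P(1;A,B)$, $(I-A)^{-1}$ all commute among themselves. One should also note that~(\ref{e:moment}) presupposes the $n$th factorial moment exists, which is guaranteed whenever $\sp(A)<1$ (so that $P(z;A,B)$ is analytic on the closed unit disk, as in the proof of Lemma~\ref{t:spA}) or, in the degenerate case, when $B=-kA$ with $k$ a positive integer, where $P(z;A,B)=(I-zA)^{k-1}$ is a polynomial.
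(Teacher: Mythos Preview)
Your proposal is correct and follows essentially the same route as the paper: the corollary is stated immediately after the identity $P(z;A,kA+B)=P(z;A,B)(I-zA)^{-k}$ and is deduced directly from either (\ref{e:Mna}) or (\ref{e:Mnb}), exactly as you do. Your extra care in justifying the commutativity of the factors is a welcome addition that the paper leaves implicit.
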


If one remembers that $\vbeta P(1; A,B)$ is a vector of which the components add-up to one, the similarity with the factorial moments of discrete PH distributions is striking (see equation (2.15) of Latouche and Ramaswami~(1999)\nocite{lr99}).

To conclude this section, we review the examples in~Wu and Li (2010)\nocite{wl10}:
\begin{itemize}
\item If $B= \alpha A$ for $\alpha \geq -1$, then $(A+B) D(z;A) = (1+\alpha) \log(I-zA)^{-1}$ and $P(z;A,\alpha A)=(I-zA)^{-(1+\alpha)}$.
\item If $B=-k A$ for $k \geq 0$, $k$ integer, $P(z;A,-kA)=(I-zA)^{k-1}$ as proved in Theorem~\ref{t:pzcommute}.
\item If $A=0$, then $D(z;0)=z$ and $P(z;0,B) = e^{zB}$.
\end{itemize}
We shall further examine this last case in the remainder of the paper.

\section{PH-Poisson distributions}
\label{s:ph-poisson}

\subsection{Definition and comparison to PH distributions} \label{subsec:defcomp}

We restrict our attention to distributions for which $A=0$, with the added constraint that $\vbeta \geq \vzero$, $B \geq 0$.  The assumption that $\vbeta$ and $B$ are non-negative makes it easier to ascertain that $\DD(\vbeta,0,B)$ is the representation of a probability distribution. In addition, as we show in Theorem~\ref{t:physical}, it provides us with a physical interpretation in terms of a Markovian process, which explains why we call these {\em Phase-type Poisson} distributions, or PH-Poisson for short.

\begin{defn}
A random variable $X$ has a PH-Poisson distribution with representation  $\PP(\vbeta,B)$ if 
\begin{equation}
   \label{e:mpoisson}
\P[X=n] = p_n = \frac{1}{n!} \vbeta B^n \vone, \qquad \mbox{for $n \geq 0$,}
\end{equation}
where $B \geq 0$ is a matrix of order $m$ and $\vbeta \geq \vzero$ is a row-vector of size $m$ such that $\vbeta e^B \vone = 1$.
\end{defn}

Note that $\vbeta \vone < 1$, unless $B = 0$.   In the notation of Wu and Li~(2010)\nocite{wl10}, the PH-Poisson distribution with representation $\PP(\vbeta,B)$  belongs to the generalized ($a,b,0$) family, with $A=0$, $B=B$, $P_0=e^{-B}$ and $\vgamma = \vbeta e^B$.
The generating  function $p(z)= \sum_{n \geq 0} z^n p_n$ is given by $p(z) = \vbeta e^{zB} \vone$ and  the factorial moments by
\begin{equation}
   \label{e:mpmoments}
\E[X(X-1)\cdots (X-n+1)] = \vbeta B^n e^{B} \vone.
\end{equation}
It is easy to see that PH and PH-Poisson distributions are essentially two different families of probability distributions.  Indeed, assume that $X$ is PH-Poisson with representation $\PP(\vbeta,B)$ and $Y$ is PH with representation PH($\valpha,T$).  From (\ref{e:mpoisson}), it results that
\begin{equation}
   \label{e:B}
\P[X=n] \approx (\sp(B))^n n^r / n!,
\end{equation}
asymptotically as $n \rightarrow \infty$, where $r$ is the index of $\sp(B)$, and similarly,
\begin{equation}
   \label{e:T}
\P[Y=n] = \valpha T^{n-1} (I-T) \vone \approx (\sp(T))^n n^s,
\end{equation}
where $s$ is the index of $\sp(T)$.  It is obvious that for any given $B$ there  is no $T$ such that the right-hand sides of (\ref{e:B}) and (\ref{e:T}) coincide for all $n$ big enough, unless $\sp(B)=\sp(T)=0$.

If $\sp(B)=0$, then there exists $k \leq m$ such that $B^k =0$, the distribution of $X$ is concentrated on $\{ 0, 1, \ldots, k\}$, and $X$ does have a PH representation by Theorem 2.6.5 in Latouche and Ramaswami~(1999)\nocite{lr99}.

\begin{exemple}  \rm {\bf Tail of the density.}  We see  from (\ref{e:B}) that the density of PH-Poisson distributions  drops sharply to zero.  We compare three different densities on Figure~\ref{f:smalldens}: one is a PH-Poisson distribution, the second a PH distribution and the third a Poisson distribution.  We have connected the points of the densities  for better visual appearance, and we plot on the right-hand side the tail of the densities in semi-logarithmic scale.

\begin{figure}[!tbp]
  \centering
     \includegraphics[scale=0.65]{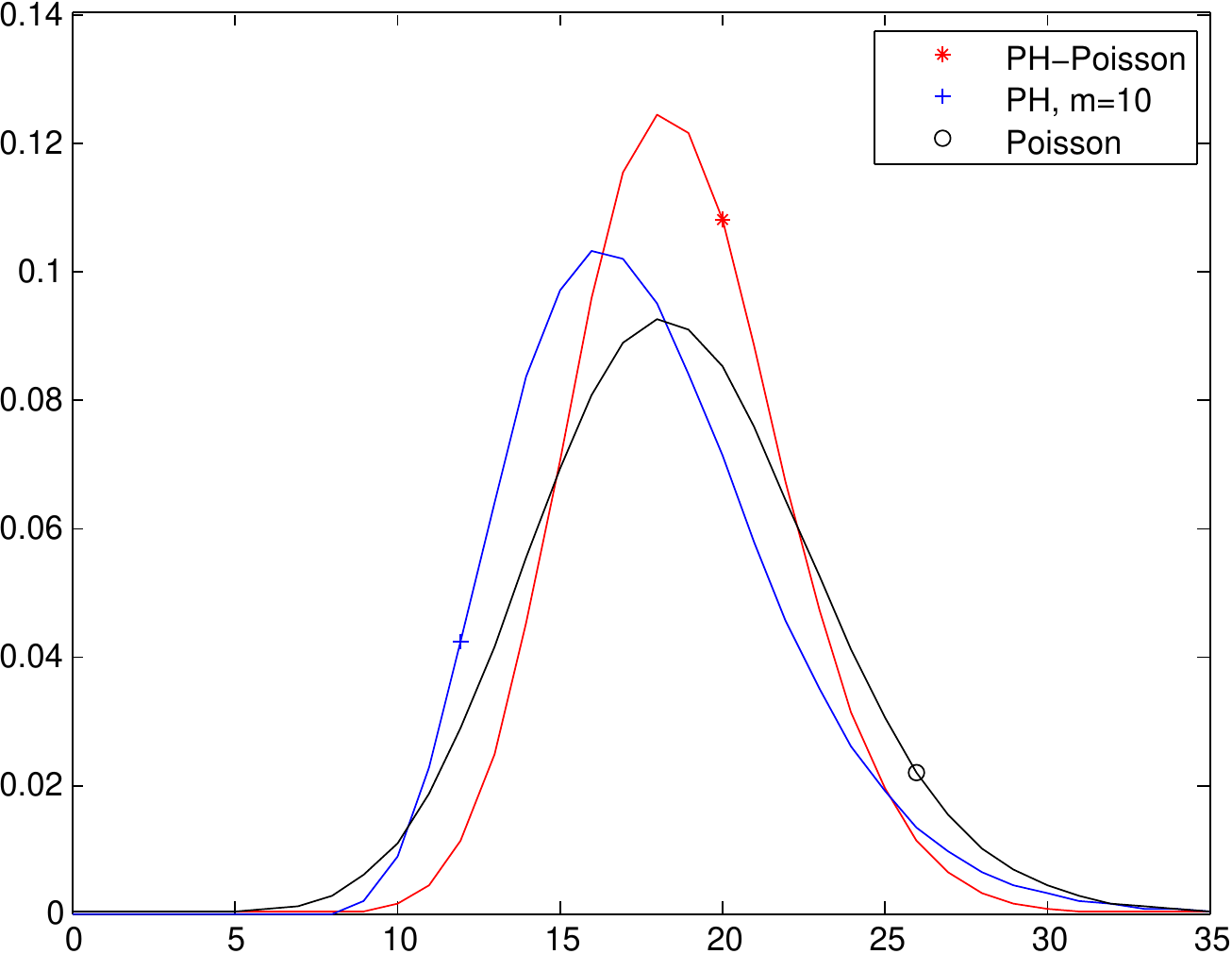}
   \includegraphics[scale=0.65]{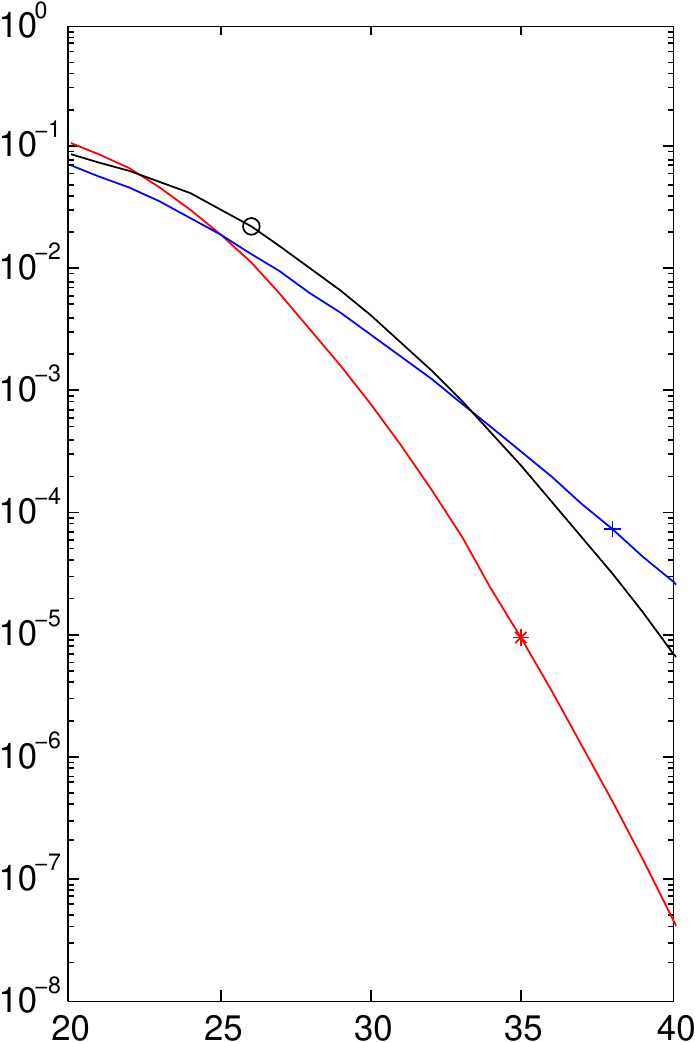}
    \caption{Density function for a PH-Poisson distribution with $m=10$ and mean 18.71 (curve marked with $*$), the Poisson distribution with the same mean (marked with $+$) and the minimal-variance PH distribution with the same order and the same mean  (marked with  $\circ$).}
    \label{f:smalldens}
\end{figure}

The curve marked with a ``$*$'' is the density of the PH-Poisson distribution with $m=10$, $B_{ii}=10$, $1 \leq i \leq m$, and $B_{i,i+1}= 37.5$, $1 \leq i \leq m-1$.  The vector $\vbeta$ is given by
\[
\vbeta = \vligne{1 & 0 & \cdots & 0} (\diag(e^B \vone))^{-1}
\]
(it is easy to verify that $\vbeta e^B \vone = 1$.)  Its mean $\mu$,  variance $\sigma^2$ and  coefficient of variation C.V. equal to $\sigma/\mu$ are given in the first row of Table~\ref{t:smalldens}, as well as the spectral radius S.R. of the matrix $B$.

\begin{table}[h!]
  \centering
  \begin{tabular}{l|cccc}
     & $\mu$ & $\sigma^2$ & C.V. & S.R.\\
\hline
PH-Poisson & 18.71 & 10.35 & 0.17 & 10\\
Phase-type & `` & 16.30 & 0.22 & 0.47 \\
Poisson & `` & 18.71 & 0.23 & 18.71 
  \end{tabular}
\caption{Mean, variance and coefficient of variation of the three distributions of Figure~\ref{f:smalldens}.}    \label{t:smalldens}
\end{table}

The curve marked with a ``$+$'' is the density of the  PH distributions with the same order $m$ and mean $\mu$ and minimal variance (see Telek~(2000)\nocite{telek00} for details).  The curve marked with a ``$\circ$'' is the Poisson density with parameter equal to the mean $\mu$.  The variance,   coefficient of variation, and spectral radius of these two densities are also given in Table~\ref{t:smalldens}.

The plot on the right-hand side of Figure~\ref{f:smalldens} clearly indicates that the PH-Poisson density decays asymptotically the fastest of the three, this is due to the combination of a relatively small spectral radius and of the factor $1/n!$.  We also see from the plot on the left-hand side, and from Table~\ref{t:smalldens}, that it is the most concentrated around the mean.
\end{exemple}

\begin{exemple}  \rm {\bf Small variance.}  We pursue here the comparison between PH-Poisson distributions and minimal variance PH distributions, showing  that PH-Poisson distributions may prove to be a useful alternative to PH distributions when modeling discrete distributions with small variance. 

The curve marked with a ``$*$'' on Figure~\ref{f:smallvar} is the density of the PH-Poisson distribution with $m=10$, $B_{ii}= 2+4 i$, $1 \leq i \leq m$, and $B_{i,i+1}= 0.5$, $1 \leq i \leq m-1$.  The vector $\vbeta$ is given by
\[
\vbeta = \frac{1}{m}\vligne{1 & 1 & \cdots & 1} (\diag(e^B \vone))^{-1}.
\]
Its mean, variance, coefficient of variation and spectral radius are given in Table~\ref{t:smallvar}.

\begin{figure}[!tbp]
  \centering
   \includegraphics[scale=0.65]{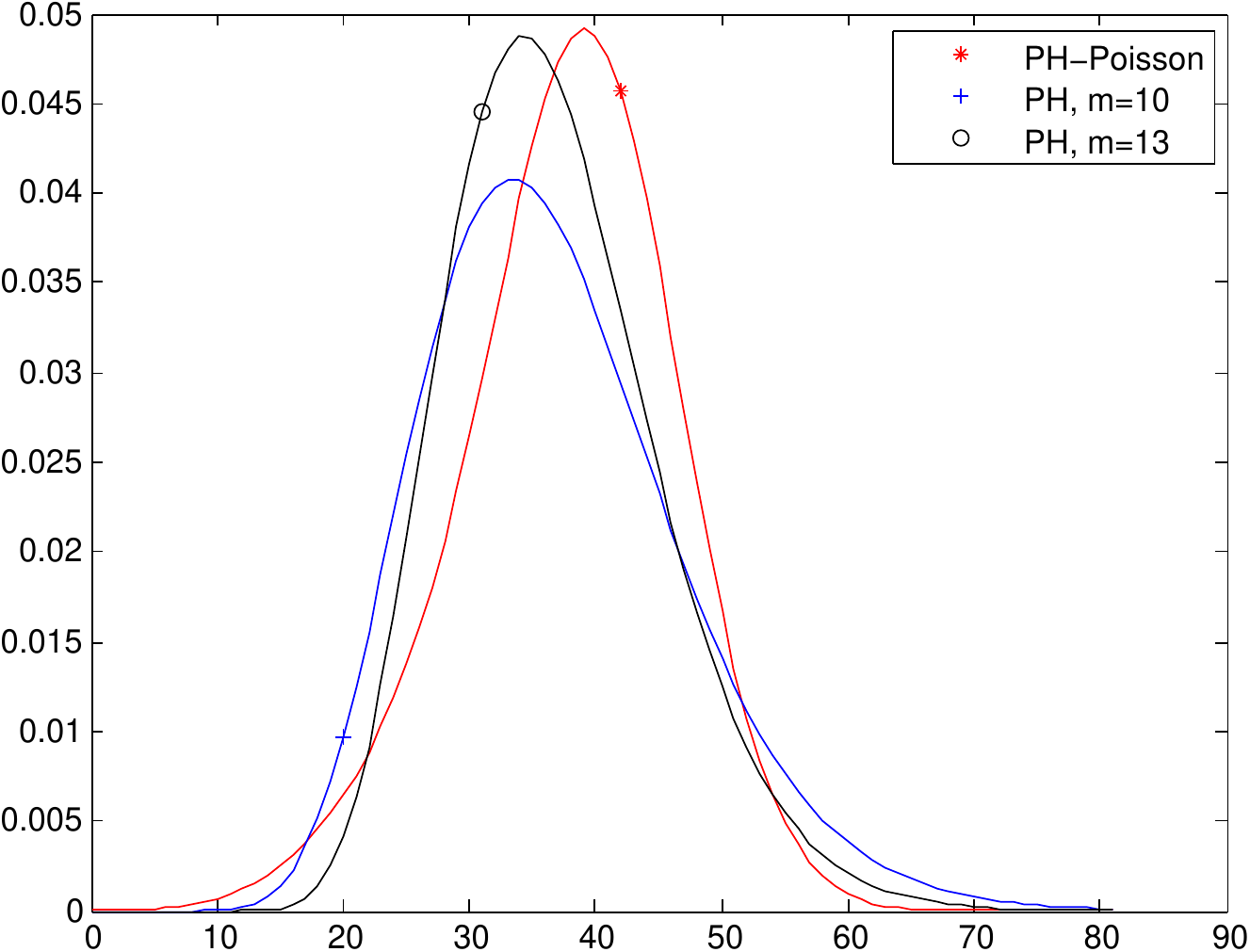}
    \caption{Density function for a PH-Poisson distribution with $m=5$ (curve marked with $*$), a minimal-variance PH distribution with $m=5$ (marked with $+$) and a minimal-variance PH distribution with $m=17$ (marked with  $\circ$). The distributions all have the same mean $\mu=34.00$.}
    \label{f:smallvar}
\end{figure}

The two other curves are the density functions of PH distributions with minimal variance, with the same mean as the PH-Poisson distributions, and with different orders. 
The one marked with ``$+$'' has the same order $m=10$ as the PH-Poisson distribution, the one  marked with  ``$\circ$'' has order $m=13$, the smallest value for which the minimal variance is smaller than that of the PH-Poisson distribution. 

\begin{table}
  \centering
  \begin{tabular}{l|cccc}
     & $\mu$ & $\sigma^2$ & C.V. & S.R.\\
\hline
PH-Poisson & 37.71 & 73.89 & 1.96 & 42\\
Phase-type, $m=10$ & `` & 104.5 & 2.77 & 0.73 \\
Phase-type, $m=13$ & `` & 71.69 & 1.90 & 0.66 
  \end{tabular}
\caption{Mean, variance and coefficient of variation of the three distributions of Figure~\ref{f:smallvar}.}
   \label{t:smallvar}
\end{table}
\end{exemple}

\subsection{A physical interpretation} \label{subsec:physint}

We now give a physical interpretation for PH-Poisson distributions. First, we define the Poisson process \{$\theta_1$, $\theta_2$, \ldots\} of rate $\nu = \max_{i}{(B \vone)_i}$.  Second, we define $P=\nu^{-1} B$; $P$ is a sub-stochastic matrix, possibly stochastic. Next, we consider a discrete PH random variable $K$ with representation $(\valpha, P)$, where $\valpha = c\boldsymbol{\beta}$ for some arbitrary but fixed constant $c \leq (\boldsymbol{\beta}\boldsymbol{1})^{-1}$. In the present description, the Markov chain with transition matrix $P$ makes a transition at each event of the Poisson process and it gets absorbed at time $T=\theta_K$. Finally, we count the number $N(t)$ of transitions between {\em transient} states until the Markov chain enters its absorbing state; that is, $N(t)$ is the number of Poisson events in the interval $(0,t)$ for $t<T$ and $N(t)=K-1$ for $t \geq T$.

\begin{thm}
   \label{t:physical}
If $\valpha = c\boldsymbol{\beta}$ for some arbitrary but fixed constant $c \leq (\boldsymbol{\beta}\boldsymbol{1})^{-1}$
, then $p_n$ defined in (\ref{e:mpoisson}) is the conditional probability
\begin{equation}
   \label{e:physical}
p_n = \P[N(1)=n | T >1].
\end{equation}
\end{thm}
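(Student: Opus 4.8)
The plan is to compute the joint probability $\P[N(1)=n,\,T>1]$ in closed form, sum it over $n$ to recover $\P[T>1]$, and then take the ratio. Everything should come down to the Poisson marginal of the number of events in $(0,1)$ together with the survival probabilities of the embedded Markov chain.

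First I would pin down the event. On $\{T>1\}$ we have $1<T$, so by the definition of $N(\cdot)$ the value $N(1)$ is exactly the number of Poisson points in $(0,1)$. Hence $\{N(1)=n\}\cap\{T>1\}$ is the event that there are exactly $n$ Poisson points in $(0,1)$ and that $\theta_K>1$; since on the former event $\theta_n\le 1<\theta_{n+1}$, the condition $\theta_K>1$ is equivalent to $K\ge n+1$, i.e. the chain is still in a transient state after its first $n$ transitions. Thus
\[
\{N(1)=n\}\cap\{T>1\}=\{\text{exactly }n\text{ Poisson points in }(0,1)\}\cap\{K\ge n+1\}.
\]

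Next I would use the independence built into the construction: the Poisson event times and the sequence of states visited by the chain are independent, so the number of points in $(0,1)$ (Poisson with mean $\nu$) is independent of $\{K\ge n+1\}$. The probability that the chain, started from the sub-probability vector $\valpha$, has avoided absorption through its first $n$ transitions is $\valpha P^n\vone$. Therefore, using $\valpha=c\vbeta$ and $P=\nu^{-1}B$,
\begin{align*}
\P[N(1)=n,\,T>1]
&= \frac{e^{-\nu}\nu^n}{n!}\,\valpha P^n\vone
 = \frac{e^{-\nu}\nu^n}{n!}\,c\,\vbeta(\nu^{-1}B)^n\vone \\
&= \frac{c\,e^{-\nu}}{n!}\,\vbeta B^n\vone,
\end{align*}
so the rate $\nu$ cancels. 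Summing over $n\ge 0$ and invoking the normalization $\vbeta e^{B}\vone=1$ gives $\P[T>1]=c\,e^{-\nu}\,\vbeta e^{B}\vone=c\,e^{-\nu}$, and dividing makes the constant $c$ cancel as well, leaving $\P[N(1)=n\mid T>1]=\tfrac{1}{n!}\vbeta B^n\vone=p_n$, which is (\ref{e:mpoisson}).

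I expect the only subtle points to be the two reductions above: that on $\{T>1\}$ the count $N(1)$ coincides with the number of Poisson points in $(0,1)$ and that this, together with $\{T>1\}$, is precisely $\{K\ge n+1\}$; and the independence of the Poisson count from the chain's survival event. The rest is routine algebra. It is also worth remarking why $c$ and $\nu$ may be chosen freely: the bound $c\le(\vbeta\vone)^{-1}$ is needed only so that $\valpha=c\vbeta$ is a genuine sub-probability vector, and $\nu=\max_i(B\vone)_i$ only so that $P=\nu^{-1}B$ is sub-stochastic; neither affects the conditional law, since both cancel in the ratio.
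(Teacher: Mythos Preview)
Your argument is correct. You arrive at the same joint formula $\P[N(1)=n,\,T>1]=e^{-\nu}\,\valpha B^n\vone/n!$ as the paper, and the remainder (summing over $n$ and dividing) is identical.

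The route, however, is genuinely different. The paper works with the matrices $(M_k(t))_{ij}=\P[N(t)=k,\vphi(t)=j\mid\vphi(0)=i]$ and establishes $M_k(t)=e^{-\nu t}B^kt^k/k!$ by induction on $k$, conditioning on the epoch of the first Poisson event and solving the resulting convolution integral. You instead exploit the product structure of the uniformization directly: decompose $\{N(1)=n,\,T>1\}$ as $\{\text{exactly }n\text{ Poisson points in }(0,1)\}\cap\{K\ge n+1\}$ and use the independence of the Poisson clock from the embedded jump chain to factor the probability. Your approach is shorter and makes the cancellation of $\nu$ and $c$ transparent from the outset; the paper's approach is more computational but, as a by-product, yields the full phase-tracking joint law $M_k(t)$ for all $t$, not just the marginal at $t=1$ after summing out the phase.
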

\begin{proof}
Define $M_k(t)$ such that
\[
(M_k(t))_{ij} = \P[N(t)=k, \vphi(t)=j | \vphi(0)=i], \qquad \mbox{for $1 \leq i, j \leq m$}.
\]
One easily verifies that $M_0(t) = e^{-\nu t} I$ and one proves by induction that
\begin{equation}
   \label{e:mmk}
M_k(t) = e^{-\nu t} (\nu P)^k t^k / k! = e^{-\nu t} B^k t^k/ k!,
\end{equation}
for $k \geq 1$.  Equation (\ref{e:mmk})  holds for $k=0$ and we assume that it holds for some $k-1$.  Conditioning on the epoch $u$ of the first Poisson event, we find that
\begin{align*}
M_k(t) & = \int_0^t e^{-\nu u} \nu P M_{k-1}(t-u) \, \ud u \\
 & = \int_0^t e^{-\nu u} B e^{-\nu(t-u)} B^{k-1}  (t-u)^{k-1}/(k-1)!  \, \ud u \\
 & =  e^{-\nu t}  B^{k} /(k-1)! \int_0^t (t-u)^{k-1}  \, \ud u \\
 & =  e^{-\nu t}  B^{k} t^k /k!.
\end{align*} 
Taking $t=1$, we find that
\begin{align*}
\P[N(1)=k, T>1] & = \sum_{1\leq i \leq M}\sum_{1\leq j \leq M} \alpha_i\P[N(t)=k, \vphi(t)=j | \vphi(0)=i] \\
 &= \valpha e^{-\nu}B^k/k! \vone,
\end{align*}
{so that}
$ \P[T>1]  = \valpha e^{-\nu}e^B \vone $,
and
\begin{align*}
 \P[N(1)=k | T >1] & = (\valpha e^{-\nu}e^B \vone)^{-1} \valpha e^{-\nu}B^k/k! \vone\\
 &=  (\valpha e^B \vone)^{-1} \valpha  B^k/k! \vone.
\end{align*}
If $\valpha = c \vbeta$ for any scalar $c \leq (\boldsymbol{\beta}\boldsymbol{1})^{-1}$, then $\P[N(1)=k | T >1] =p_k $ for all~$k$. This concludes the proof.
\end{proof}

\begin{rem} \label{rem:stoch} \rm If $P$ is stochastic, then the random variable $K$ has an unusual PH distribution, as it is either equal to zero or to infinity.  Still, the argument in the proof of Theorem~\ref{t:physical} holds true. Note that if $P$ is stochastic, then $B \vone = \nu \vone$ and the distribution (\ref{e:mpoisson}) is Poisson with parameter~$\nu$. 
\end{rem} 

\begin{exemple} \label{exemple:transition} \rm

This is a PH-Poisson distribution chosen to illustrate the combined effect of the conditional distribution imposed on the number of transitions among the phases.  The representation is $(\vbeta, B)$ with 
\begin{equation}
   \label{e:b0404k}
B= \left[
\begin{array}{ccccc}
   5 &   .05    &     0    &     0     &    0 \\
    .05  &  9  &  .05    &     0     &    0 \\
         0 &   .05&   13  &  .05    &     0 \\
         0  &       0  &  .05 &  17 &   .05 \\
         0    &     0    &     0  &  .05 &  21  
\end{array} \right]
\end{equation}
{and} 
\begin{equation}
   \label{e:beta0404k}
\vbeta = \gamma \vligne{5. & 2.5 & 3. & 2.25 & 6.} \exp\{-\diag(5, 9, 13, 17, 21)\}
\end{equation}
where the scaling factor $\gamma$ is such that $\vbeta e^{B} \vone = 1$.  Its first two moments are $\mu = 13.84 $ and $\sigma^2 = 47.31$, and its density is given on Figure~\ref{f:mp0404k}.
\begin{figure}[!tbp]
  \centering
   \includegraphics[scale=0.67]{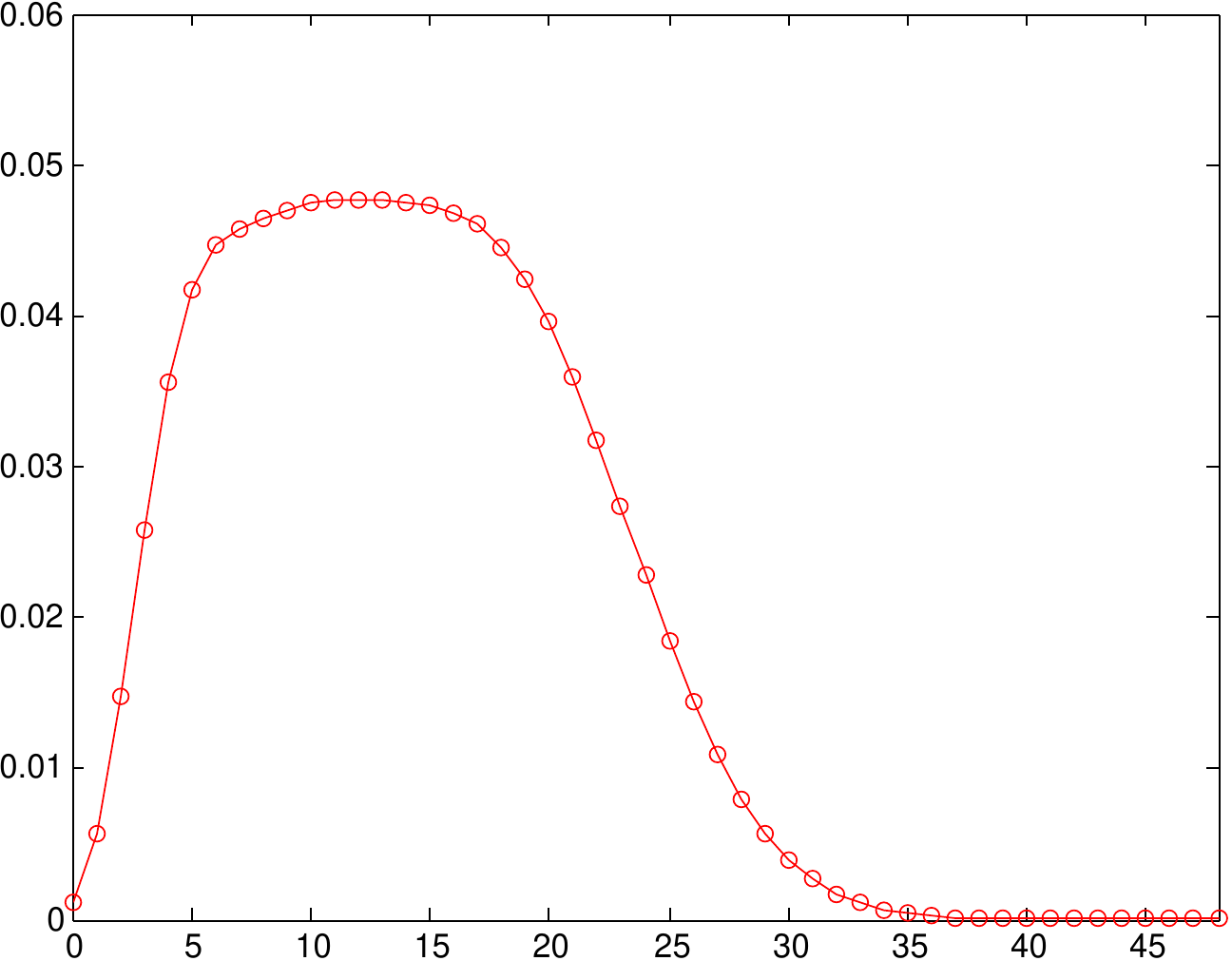}
    \caption{Density function for a PH-Poisson distribution with representation $(\vbeta, B)$ given in (\ref{e:b0404k}, \ref{e:beta0404k}).}
    \label{f:mp0404k}
\end{figure}
The phase-type representation is $(\nu; \valpha, P)$ with $\nu =21.05$,
\begin{equation}
   \label{e:p0404k}
P = \left[
  \begin{array}{ccccc}
    0.2375 &   0.0024   &      0  &       0    &     0  \\
    0.0024 &   0.4276  &  0.0024   &      0    &     0  \\
         0  &  0.0024  &  0.6176   & 0.0024  &       0  \\
         0   &      0  &  0.0024   & 0.8076 &   0.0024  \\
         0    &     0  &       0  & 0.0024  &  0.9976   
  \end{array}
\right]
\end{equation}
and 
\[
\valpha \approx \vligne{0.99 & 0.91 \, 10^{-2} & 0.20 \, 10^{-3} & 0.27 \, 10^{-5} & 0.13 \, 10^{-6}},
\]
which is the vector $\boldsymbol{\beta}$ normalized so that $\valpha \vone =1$.

Denote by  $N^*= \sup\{k:\theta_k<1\}$ the total number of Poisson events in $(0,1)$.  If $T \leq 1$, then $N(1)= K-1 < N^*$,   if $T>1$, then $N(1) = N^*<K$. 
On the average, the Poisson process produces $\nu$ events in the interval $(0,1)$, and the Poisson distribution has a relatively small standard deviation, so one expects $N^*$ to take values close to $\nu \approx 21$.
The matrix $P$ in (\ref{e:p0404k}) is irreducible, albeit with a small probability of migration from one phase to another, so that the initial phase plays a significant role in the distribution of $K$.

If the initial phase is 1, then the absorption probability is about 0.76, and it is likely that $K$ will be small; it is therefore necessary, for the condition $T>1$ to be fulfilled,  that the Poisson process produces few events in $(0,1)$.

On the other hand, if the initial phase is 5, then the PH Markov chain will remain in that phase for a large number of transitions, it is likely that  $K$  will be large, so that $T$ is likely to be much larger than 1, and it is not expected that the condition $[T>1]$ puts much constraint on $N^*$.

\end{exemple}

\section{EM algorithm}
\label{s:em-algorithm}

In this section, we exploit the probabilistic interpretation of PH-Poisson distributions given in Section~\ref{subsec:physint}, and we 
 develop an EM algorithm for fitting PH-Poisson distributions into data samples. 

The EM algorithm is a popular iterative method in statistics for computing maximum-likelihood estimates from data that is considered incomplete. The procedure can be explained briefly as follows. Let $\boldsymbol{\theta} \in \Omega$ be the set of parameters to be estimated. We denote by $\bsautre{X}$ a random complete data sample 
and by $f(\bsautre{X}\,|\,\bs{\theta})$ its conditional density function, given the parameters $\bs{\theta}$. 
The maximum-likelihood estimator $\hat{\bs{\theta}}$ is defined as 
\begin{align*} 
\hat{\bs{\theta}} = \operatorname*{arg\,max}_{\bs{\theta} \in \Omega} \log f (\bsautre{X}\,|\,\bs{\theta}).
\end{align*} 
For one reason or another, instead of observing the complete data sample $\bsautre{X}$, we observe an incomplete data sample $\bsautre{Y}$. {Thus, $\bsautre{X}$ can be replaced by its sufficient statistic $(\bsautre{Y},\bsautre{Z})$, where $\bsautre{Z}$ is the sufficient statistic of the unobserved data.} As $\bsautre{X}$ is unobservable, instead of maximizing $\log f(\bsautre{X}\,|\,\bs{\theta})$ we maximize its conditional expectation given the incomplete data sample {$\bsautre{Y} = \bs{y}$} and the current estimates $\bs{\theta}^{(s)}$, at each $(s + 1)$th iteration for $s \geq 0$.

%

The EM algorithm can thus be decomposed into two steps:
\begin{itemize} 
\item E-step---computing the conditional expectation of $\log f(\bsautre{X}\,|\,\bs{\theta})$ given the incomplete data sample {$\bs{y}$} and the current estimates $\bs{\theta}^{(s)}$ 
\begin{align*} 
Q(\bs{\theta},\bs{\theta^{(s)}}) = \E[\log f(\bsautre{X}\,|\,\bs{\theta})\,|\, {\bs{y}}, \bs{\theta}^{(s)}],
\end{align*}
\item M-step---obtaining the next set $\bs{\theta}^{(s + 1)}$ of estimates by maximizing the expected log-likelihood determined in the E-step
\begin{align*}
\bs{\theta}^{(s + 1)} = \operatorname*{arg\,max}_{\bs{\theta} \in \Omega} Q(\vc\theta,{\vc\theta}^{(s)}).
\end{align*} 
\end{itemize} 
When fitting a PH-Poisson distribution into a data sample, the parameters to be estimated are $\bs{\theta} = \{\nu, \bs{\alpha}, P\}$. Without loss of generality, we assume that $\bs{\alpha}\bs{1} = 1$ in the chosen representation. By Theorem~\ref{t:physical}, an observation $y$ can be thought of as the number of Poisson events in the time interval $[0,1]$, given that the transient Markov chain with the transition matrix $P$ has not been absorbed at time $t = 1$. This observation can be considered incomplete as it tells us neither the initial phase $\varphi(0)$ of the Markov chain nor how it has evolved during $[0,1]$;  a \emph{complete} observation can be represented by 
%
$
 x = (\varphi_0, \varphi_1, \ldots, \varphi_y),
$
%
where $\varphi_i$  is the phase of the Markov chain at the $i$th Poisson event and $\varphi_i \neq 0$ for all $i = 0, \dots, y$.
The conditional density of the complete observation $x$ given~$\bs{\theta}$ is 
\begin{align*} 
f(x\,|\,\bs{\theta}) = (\boldsymbol{\alpha} e^{\nu P} \boldsymbol{1})^{-1}\, \frac{\nu^y}{y!}\, {\alpha_{\varphi_0}\,\prod\limits_{i = 0}^{y - 1} p_{\varphi_i \varphi_{i + 1}}}.
\end{align*} 

Suppose that the complete data sample $\boldsymbol{x}$ contains $n$ observations, each of which is denoted by $x^{[k]}$ and includes an incomplete observation $y^{[k]}$, for $k = 1, \ldots, n$. Then, the conditional density of $\boldsymbol{x}$ given $\vc\theta$ is 
\begin{align*} 
f(\vc x \,|\, \vc\theta) & = (\boldsymbol{\alpha} e^{\nu P} \boldsymbol{1})^{-n}\,  \prod_{k = 1}^{n}\frac{\nu^{y^{[k]}}}{y^{[k]}!}\,\prod_{k = 1}^{n} \alpha_{\varphi_0^{[k]}} \, \prod_{k = 1}^{n} \left(\prod_{i = 0}^{y^{[k]} - 1}p_{\varphi_i^{[k]} \varphi_{i + 1}^{[k]}}\right) \\
& =  (\boldsymbol{\alpha} e^{\nu P} \boldsymbol{1})^{-n}\,  \prod_{k = 1}^{n}\frac{\nu^{y^{[k]}}}{y^{[k]}!}\,\prod_{i = 1}^{m} \alpha_{i}^{S_i} \prod_{i = 1}^{m} \prod_{j = 1}^{m} p_{ij}^{N_{ij}}, 
\end{align*} 
where
\begin{align*} 
S_i=\sum_{k=1}^n \mathds{1}_{\{\varphi_0^{[k]}=i\}} \quad \mbox{for } i = 1, \ldots, m
\end{align*} 
is the number of complete observations in $\boldsymbol{x}$ with initial phase $i$,  and 
\begin{align*} 
N_{ij}=\sum_{k=1}^n\sum_{t\geq1} \mathds{1}_{\{\varphi_{t-1}^{[k]}=i,\varphi_{t}^{[k]}=j \}} \quad \mbox{for } i, j = 1, \ldots, m
\end{align*}
is the total number of jumps in $\bs{x}$ from phase $i$ to phase $j$.
Thus, the log-likelihood function is given by
\begin{align} 
\log f(\vc x \,|\,\vc\theta) & = -n \log( \boldsymbol{\alpha} e^{\nu P} \boldsymbol{1}) + \sum_{k = 1}^{n} y^{[k]} \log \nu - \sum_{k = 1}^{n} \log (y^{[k]}!) \nonumber \\
& \;\;\;+ \sum_{i = 1}^{m}S_i \log \alpha_i  + \sum_{i = 1}^{m}\sum_{j = 1}^{m} N_{ij} \log p_{ij}. \label{eqn:logf}
\end{align} 

\paragraph{Maximum-likelihood estimators}
To obtain closed-form expressions for the maximum-likelihood estimators $\hat{\bs{\theta}}$  is not straightforward. Applying the Karush-Kuhn-Tucker approach (see Chapter 12 in Nocedal and Wright~(2000)\nocite{nocedalwright}), it can be verified that the maximization problem 
\begin{align*} 
&\max_{\bs{\theta}} \; \log f(\bs{x} \; | \; \bs{\theta}) \\
\intertext{subject to}
\boldsymbol{\alpha}\boldsymbol{1} = 1, \qquad P \bs{1} \leq \bs{1}, \qquad &\nu > 0, \qquad p_{ij}, \alpha_i \geq 0 \quad \mbox{for } i,j = 1, \ldots, m,
\end{align*}
has the associated Lagrangian 
\begin{align*}
\mathcal{L}(\bs{\theta}, \lambda, \bs{\mu}) =   \log f(\bs{x} \; | \; \bs{\theta}) - \lambda h(\bs{\theta}) - \sum_{i = 1}^{2m +1}\mu_i g_i(\bs{\theta}), 
\end{align*} 
where 
$$ 
\begin{array}{rll} 
h(\bs{\theta}) \hspace*{-0.2cm} & = \bs{\alpha} \bs{1} - 1, \\
g_i(\bs{\theta})  \hspace*{-0.2cm}  & = 1 - \sum\limits_{j = 1}^m p_{ij} & \mbox{ for } i = 1, \ldots, m, \\
                         & = \alpha_{i - m} & \mbox{ for } i = m + 1, \ldots, 2m, \\
                         & = \nu & \mbox{ for } i = 2m + 1, 
\end{array}
$$  
and $\lambda$ and $\bs{\mu} = (\mu_1, \ldots, \mu_{2m + 1}) \geq \bs{0}$ denote the Lagrangian multipliers associated with the equality constraint $h(\bs{\theta}) = 0$, the inequality constraints $g_i(\bs{\theta}) \geq 0$ for $i = 1, \ldots, 2m$ and
$g_{2m + 1}(\bs{\theta}) > 0$, respectively. The KKT conditions, which are first-order necessary conditions for constrained optimization problems, imply that the maximum-likelihood estimators $\hat{\bs{\theta}} = (\hat{\nu}, \hat{\bs{\alpha}}, \hat{P})$ must satisfy the following constraints 
\begin{align}
& \hat{\boldsymbol{\alpha}} e^{\hat{\nu}\hat{P}}(\hat{\nu} \hat{P} \boldsymbol{1} - \frac{\sum_{k = 1}^{n} y^{[k]}}{n} \boldsymbol{1}) 
 = 0\label{eqn:confornu}  \\          
& \hat{\alpha}_i = \frac{S_i}{\hat{\eta}_i}(\sum_{j = 1}^m\frac{S_j}{\hat{\eta}_j})^{-1} \quad \mbox{ for } i = 1, \ldots, m, \label{eqn:conforalpha}  \\
& \frac{n \hat{\bs{\alpha}}}{\hat{\bs{\alpha}} e^{\hat{\nu}\hat{P}} \bs{1}}  \int_0^{\hat{\nu}} e^{(\hat{\nu} - u)\hat{P}}  \bs{e}_{i}\bs{e}\tp_j  e^{u\hat{P}}\ud u \bs{1}  - \frac{N_{ij}}{\hat{p}_{ij}}  \leq 0,  \label{eqn:conforpij} 
\end{align} 
for $i,j = 1, \ldots, m$, where $\hat{\eta}_i = \boldsymbol{e}_i\tp e^{\hat{\nu} \hat{P}}\boldsymbol{1}$ and $\boldsymbol{e}_i$ is the column vector of size~$m$ with the $i$th component being 1 and all other components being 0. 

Recall from Remark~\ref{rem:stoch}, that if $P$ is stochastic then the PH-Poisson distribution with representation $(\nu, \bs{\alpha}, P)$ is a Poisson distribution with parameter~$\nu$. In this case, the constraints~\eqref{eqn:confornu}--\eqref{eqn:conforpij} simplify considerably: the first implies that $\hat{\nu}= \sum_{k = 1}^{n} y^{[k]}/n$, the well-known maximum-likelihood estimator for the parameter of a Poisson distribution; the second becomes $\hat{\alpha}_i = S_i/n$, the maximum-likelihood estimator for the initial vector of a discrete PH distribution~(see Asmussen et al. (1996))\nocite{Asmussen96}; and the third reduces to 
\begin{align}
n\hat{\bs{\alpha}}  \int_{0}^{\hat{\nu}} e^{x(\hat{P} - I)} \ud x \bs{e}_i - \frac{N_{ij}}{\hat{p}_{ij}} \leq 0, \nonumber 
\end{align}  
{or, equivalently, 
\begin{align} 
 \hat{\nu} \hat{p}_{ij} \hat{\bs{\alpha}}  \int_{0}^{1} e^{\hat{\nu}(\hat{P} - I)x}\ud x \bs{e}_i - \frac{N_{ij}}{n} \leq 0, \label{eqn:conforpij2} 
\end{align} 
for $i, j = 1, \ldots, m$. As $\hat{P}$ is stochastic, summing the left-hand side of \eqref{eqn:conforpij2} over $i$ and $j$ gives us 
\begin{align*} 
\hat{\nu}  \hat{\bs{\alpha}}  \int_{0}^{1} e^{\hat{\nu}(\hat{P} - I)x}\ud x \bs{1} - 1/n \sum_{i,j = 1}^m N_{ij} = \hat{\nu} - 1/n \sum_{i,j = 1}^m N_{ij} = 0,
\end{align*} 
which implies that \eqref{eqn:conforpij2} is an equality for all $i, j = 1, \ldots, m$.}

\paragraph{Conditional expectation}

Thanks to the linear nature of $\log f(\bsautre X\,|\,\vc\theta)$ in the unobserved data $\bsautre Z = \{S_i, N_{ij}: i, j = 1, \ldots, m\}$, the computation of the conditional expectation of $\log f(\bsautre X\,|\,\vc\theta^{(s)})$ at the $(s + 1)$th iteration reduces to the computation of $\E [\bsautre Z\,|\,\vc y,{\vc\theta}^{(s)}]$: 
%
\begin{align} 
\E [S_i\,|\, \vect y, \vect\theta^{(s)}]& = \sum_{k = 1}^{n} \E [\mathds{1}_{\{\varphi_0^{[k]}=i\}}\,|\,y^{[k]},\vect\theta^{(s)}] \nonumber \\
& = \sum_{k = 1}^{n} \dfrac{\P [\varphi_0^{[k]}=i\,|\,\vc\theta^{(s)}]\,\P [Y^{[k]}=y^{[k]}\,|\,\vc\theta^{(s)},\varphi_0^{[k]}=i]}{\P [Y^{[k]}=y^{[k]}\,|\,\vc\theta^{(s)}]}\nonumber \\
& = \sum_{k = 1}^{n} \frac{\alpha_i^{(s)}\,\boldsymbol{e}_i^{\mbox{\tiny T}}\, (P^{(s)})^{y^{[k]}}\boldsymbol{1}}{\boldsymbol{\alpha}^{(s)}\,(P^{(s)})^{y^{[k]}}\,\boldsymbol{1}} \quad \mbox{ for } i = 1, \ldots, m,
\end{align} 
and 
\begin{align} 
& \E [N_{ij}\,|\, \vect y, \vect\theta^{(s)}] = \sum_{k=1}^n\sum_{t=1}^{y^{[k]}} \E [\mathds{1}_{\{\varphi_{t-1}^{[k]}=i,\varphi_{t}^{[k]}=j \}}\,|\,y^{[k]},\vect\theta^{(s)}] \nonumber\\
\displaybreak
& = \sum_{k = 1}^{n}\sum_{t=1}^{y^{[k]}} \dfrac{\P [\varphi_{t-1}^{[k]}=i\,|\,\vc\theta^{(s)}]\,\P [\varphi_{t}^{[k]}=j\,|\,\vc\theta^{(s)},\varphi_{t-1}^{[k]}=i]\,\P [Y^{[k]}=y^{[k]}\,|\,\vc\theta^{(s)},\varphi_t^{[k]}=j]}{\P [Y^{[k]}=y^{[k]}\,|\,\vc\theta^{(s)}]}\nonumber \\
&=  \sum_{k = 1}^{n} \frac{\boldsymbol{\alpha}^{(s)}\,\sum_{t = 1}^{y^{[k] }}\,(P^{(s)})^{t-1}\,\boldsymbol{e}_i\, p_{ij}^{(s)}\, \boldsymbol{e}^{\mbox{\tiny T}}_j\,(P^{(s)})^{y^{[k]} - t}\,\boldsymbol{1}}{\boldsymbol{\alpha}^{(s)}\,(P^{(s)})^{y^{[k]}}\,\boldsymbol{1}} \quad \mbox{ for } i,j = 1, \ldots, m.
\end{align} 

\paragraph{New estimates}

In the M-step, we obtain the new estimates ${\vc\theta}^{(s+1)} = (\nu^{(s + 1)},\vc\alpha^{(s + 1)},P^{(s + 1)})$ by maximizing the log-likelihood \eqref{eqn:logf} where $\{S_i,N_{ij}: i,j=1,\ldots,p\}$ are replaced by their conditional expectations $\E [S_i\,|\,\vc y, {\vc\theta}^{(s)}]$ and $\E [N_{ij}\,|\,\vc y, {\vc\theta}^{(s)}]$ evaluated in the E-step. The maximization problem to be solved in this step is as follows 
\begin{align*} 
\max_{\bs{\theta}} \ & \log f(\vc y, \E [S_i\,| \bs{\theta}^{(s)}], \E [N_{ij}\,| \bs{\theta}^{(s)}]\,|\bs{\theta}) \label{eqn:objf} \\
\intertext{subject to }
\boldsymbol{\alpha}\boldsymbol{1} = 1, \qquad P \bs{1} \leq \bs{1}, &\qquad \nu > 0,  \qquad p_{ij}, \alpha_i \geq 0 \quad \mbox{for } i,j = 1, \ldots, m.
\end{align*}

\vspace{1\baselineskip}
We implemented the EM algorithm in MATLAB and experimented with samples simulated from different PH-Poisson distributions. Below are the results of one such experiment. 

\begin{exemple} \rm

We used the PH-Poisson distribution $(\nu;\bs{\alpha}, P)$ given in Example~\ref{exemple:transition} to generate a sample with 1500 observations. The chosen initial parameters are $\nu^{(0)} = 10$, $ \bs{\alpha}^{(0)} = \vligne{0.1 & 0.2 & 0.4 & 0.2 & 0.1}$ and 
$$  P^{(0)} = \diag(0.5, 0.3, 0.5, 0.7, 0.1).$$
The estimated parameters obtained after 25 iterations of the EM algorithm are $\nu^{(25)} =  20.4290$, $ \bs{\alpha}^{(25)} =  \vligne{0.9054 & 0.060 &Ê0.0335 & 0.0000 & 0.0010} $, and 
$$ 
P^{(25)} = \left[\begin{array}{ccccc}
    Ê Ê0.2401 & Ê Ê0.0000 Ê&  Ê0.0000 Ê&  Ê0.0000  &Ê Ê0.0000 \\
 Ê Ê0.0000 Ê& Ê0.2610 Ê & Ê0.0000 Ê&  Ê0.0000 Ê&  Ê0.0000 \\
 Ê Ê0.0000 Ê& Ê0.0000 Ê&  Ê0.4543 Ê&  Ê0.0000 Ê&  Ê0.0000 \\
 Ê Ê0.0000 Ê& Ê0.0000 Ê & Ê0.0000 Ê&  Ê0.9088 Ê & Ê0.0000 \\
 Ê Ê0.9939 Ê &Ê0.0026 Ê&  Ê0.0000 Ê &  Ê0.0000  &Ê Ê0.0035 \\
 \end{array}\right]. $$
The Manhattan norm $||\cdot ||_1$ of the difference between the true density and the empirical data is 0.1109, between the true density and the estimated density is 0.1043, and between the empirical data and the estimated density is 0.1400. We plot four densities in Figure~\ref{fig:densities}: that for the true PH-Poisson distribution, the empirical data, the initial density and the estimated density. 

\begin{figure}[!tbp] 
\centering
\includegraphics[scale=0.41]{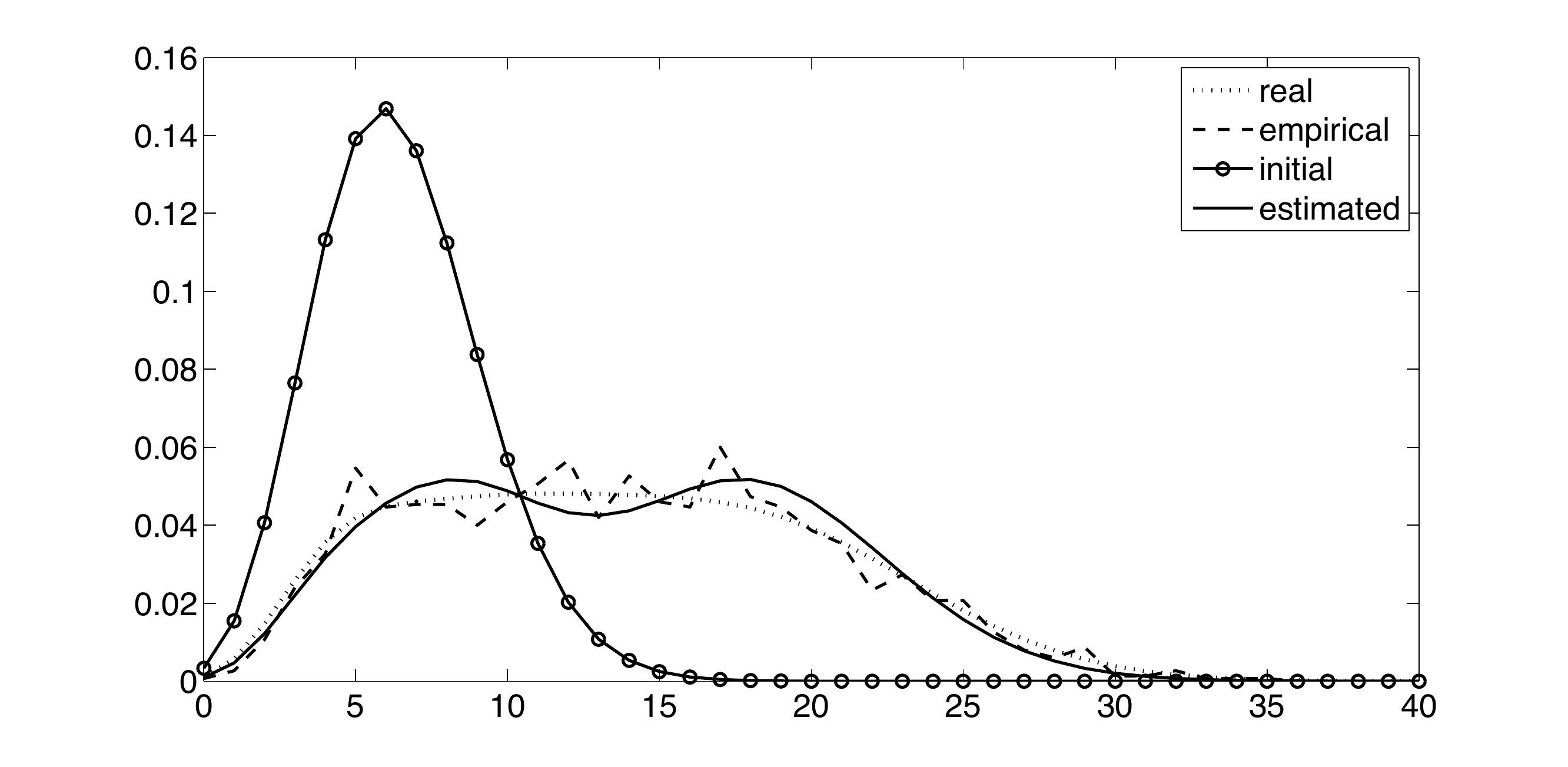} 
\caption{Density function for the true PH-Poisson distribution (the dotted curve), empirical data (the dashed curve), the initial density (the curve marked with $\circ$) and the estimated density (the continuous curve).} \label{fig:densities}
\end{figure}
\end{exemple} 
It is well-known that although the sequence $\{\bs{\theta}^{(s)}\}_{s \geq 1}$ computed with the EM algorithm always converges, it  does not always converge to the maximum-likelihood estimator $\hat{\bs{\theta}}$, but possibly to some local maximum or stationary value of $\log f(\bsautre{X}|\bs{\theta})$. The warranty of global convergence for the EM algorithm depends on properties of
the conditional density of the incomplete data $\bsautre{Y}$ given $\bs{\theta}$, and sometimes also on the starting point $\bs{\theta}^{(0)}$. We refer to Dempster et al.~(1977)\nocite{dlr77} for further details on the EM algorithm, and to Wu~(1983)\nocite{wu1983} for its convergence properties. 

Our experiments were performed using the MATLAB optimization routine \textsf{fmincon} to solve the maximization problem in the M-step.  They indicated that the results were highly sensitive to the choice of $\bs{\theta}^{(0)}$. When the starting point was chosen randomly, we observed that the EM algorithm often converged to a Poisson distribution with parameter $\sum_{k = 1}^{n} y^{[k]}/n$, even if this was a rather poor fit for the given sample. Convergence to a good fit was obtained with a starting point that either shares the same structure of zeros with the true parameters $\bs{\alpha}$ and $P$, or has a strictly positive $\bs{\alpha}^{(0)}$ and a diagonal matrix $P^{(0)}$---a mixture of Poisson distributions. 

The latter choice is obviously more practical when the structure of the true parameters is not known a priori. Empirically, a diagonal $P^{(0)}$ proved to be a good starting point even if the true matrix $P$ is not diagonal. Note that, unlike its counterpart for fitting discrete Phase-type distributions in Asmussen et al. (1996)\nocite{Asmussen96}, the EM algorithm for fitting PH-Poisson distributions does not necessarily preserve the initial structure. This is due to the term $-n \log (\bs{\alpha}e^{\nu P} \bs{1})$ in (\ref{eqn:logf}). Consequently, when starting with a diagonal $P^{(0)}$ the EM algorithm does not necessarily converge to a diagonal $P$. An interesting question for future research is to explain why mixtures of Poisson distributions serve as good starting points in the EM algorithm for fitting Phase-type Poisson distributions.

\subsection*{Acknowledgment}

All three authors thank the Minist\`ere de la Communaut\'e fran\c{c}aise de
Belgique for funding this research through the ARC grant AUWB-08/13--ULB~5.
The first and third authors also acknowledge the Australian Research Council
for funding part of the work through the Discovery Grant DP110101663.

\bigskip

\bibliographystyle{abbrv}
\bibliography{matrixPoisson}

\appendix
\end{document}